\begin{document}
\makeatletter
\def\@begintheorem#1#2{\trivlist \item[\hskip \labelsep{\bf #2\ #1.}] \it}
\def\@opargbegintheorem#1#2#3{\trivlist \item[\hskip \labelsep{\bf #2\ #1}\ {\rm (#3).}]\it}
\makeatother
\newtheorem{thm}{Theorem}[section]
\newtheorem{alg}[thm]{Algorithm}
\newtheorem{conj}[thm]{Conjecture}
\newtheorem{lemma}[thm]{Lemma}
\newtheorem{defn}[thm]{Definition}
\newtheorem{cor}[thm]{Corollary}
\newtheorem{exam}[thm]{Example}
\newtheorem{prop}[thm]{Proposition}
\newenvironment{proof}{{\sc Proof}.}{\rule{3mm}{3mm}}

\title{On The 2-Spanning Cyclability Of Honeycomb Toroidal Graphs}
\author{Brian Alspach and Aditya Joshi\\School of Information and Physical Sciences\\
University of Newcastle\\Callaghan, NSW 2308\\
Australia\\brian.alspach@newcastle.edu.au\\aditya.joshi@uon.newcastle.edu}

\maketitle

\begin{abstract} A graph $X$ is 2-spanning cyclable if for any pair of distinct vertices $u$ and $v$ there
is a 2-factor of $X$ consisting of two cycles such that $u$ and $v$ belong to distinct
cycles.  In this paper we examine the 2-spanning cyclability of honeycomb toroidal
graphs.
\end{abstract}

\section{Introduction}

Graphs consist of vertices and edges joining pairs of distinct vertices such that neither loops
nor multiple edges are allowed.  If $X$ is a graph, its vertex set is denoted $V(X)$ and
its edge set is denoted $E(X)$.  The {\it order} of a graph $X$ is $|V(X)|$ and the {\it size}
is $|E(X)|$.  The number of edges incident with a given vertex $v$ is its {\it valency} and is 
denoted $\mathrm{val}(v)$.

An edge joining vertices $u$ and $v$ is denoted $[u,v]$.  Continuing in this vein,
a path of length $t$ from $u_0$ to $u_t$ is a connected subgraph of order $t+1$ all of whose 
vertices have valency 2 other than $u_0$ and $u_t$ which have valency 1.  The path is
denoted $[u_0,u_1,\ldots,u_t]$, where $[u_i,u_{i+1}]$ is an edge for $0\leq i\leq t-1$.  If we
start with a path of length $t$ from $u_0$ to $u_t$ and add the edge $[u_0,u_t]$, we obtain
a cycle of length $t+1$ and use the notation $[u_0,u_1,\ldots,u_t,u_0]$. 

A 2-factor in a graph $X$ is a spanning subgraph of $X$ such that every vertex has valency 2.
A 2-factor $F$ of $X$ {\it separates} a set $A$ of $k$ vertices in $V(X)$ if $F$ is composed of
$k$ cycles and $A$ intersects the vertex set of each cycle in $F$ in a single vertex.
  
\begin{defn}\label{kc} {\em A graph $X$ is }$k$-spanning cyclable {\em if for every $A\subseteq
V(X)$ such that $|A|=k$ there is a 2-factor of $X$ separating $A$.}
\end{defn}
  
The preceding concept has been studied because of the general problem of embedding cycles in
graphs.  Lin, Tan, Hsu and Kung \cite{L1} considered $k$-spanning cyclability for $n$-cubes.
Yang, Hsu, Hung and Cheng \cite{Y1} considered 2-spanning cyclability for generalized Petersen
graphs. In a recent paper, Qiao, Sabir and Meng studied $k$-cyclability of Cayley graphs on
symmetric groups whose connection sets are transposition trees.

Of course, a graph is Hamiltonian if and only if it is 1-spanning cyclable.  It is easy to see that
a regular graph of valency 3 cannot be 3-spanning cyclable because a cycle passing through a
vertex $v$ must contain two neighbors of $v$.  Hence, trivalent graphs are at most 2-spanning
cyclable.  Generalized Petersen graphs are a well-studied family of trivalent graphs and it is
natural that they were investigated in \cite{Y1}.  Another interesting family of trivalent graphs is
the honeycomb toroidal graphs (see \cite{A1} for a survey of this family).  A distinct advantage
they have over generalized Petersen graphs is that they are vertex-transitive, in fact, they are
Cayley graphs.  Also, they have been studied extensively by both computer scientists and mathematicians. 

\begin{defn}\label{hc} {\em The} honeycomb toroidal graph {\em $\mathrm{HTG}(m,n,\ell)$,
$m-\ell$ even, $n$ even, and $n\geq 4$, has
vertex set $\{u_{i,j}:0\leq i\leq m-1, 0\leq j\leq n-1\}$ and the following edge set $E$, where all
computations in the first subscript are done modulo $m$ and in the second subscript modulo $n$.
We have $[u_{i,j},u_{i,j+1}]\in E$ for all $i$ and $j$, $[u_{i,j},u_{i+1,j}]\in E$ for all $0\leq i\leq
m-2$ whenever $i+j$ is odd, and $[u_{m-1,j},u_{0,j+\ell}]$ whenever $m,j\mbox{ and }\ell$ have
the same parity.}
\end{defn}

Edges of the form $[u_{i,j},u_{i,j+1}]$ are called {\it vertical edges}, edges of the form $[u_{i,j},u_{i+1,j}]$
are called {\it flat edges}, and edges of the form $[u_{m-1,j},u_{0,j+\ell}]$ are called {\it jump
edges}.  It is well known \cite{M1} that honeycomb toroidal graphs are Hamiltonian, that is, they are
1-spanning cyclable.  As mentioned above, they cannot be 3-spanning cyclable.  Thus, the pertinent
question is:  Can we characterize the 2-spanning cyclable honeycomb toroidal graphs?

It is easy to see that $\mathrm{HTG}(m,n,\ell)$ is isomorphic to $\mathrm{HTG}(m,n,n-\ell)$.  Thus,
for subsequent results which are given in terms of $\ell$, a stronger result may arise by using $n-\ell$
instead of $\ell$ for $\ell>n/2$.

The parity of $m$ has played a fundamental role in several problems which have been investigated for 
honeycomb toroidal graphs.  This appears to be the case for 2-spanning cyclability as well.  The reason
for this is that it is easy to add two columns to go from $\mathrm{HTG}(m,n,\ell)$ to 
$\mathrm{HTG}(m+2,n,\ell)$ and preserve various properties.  Here is a brief description of the
process (see \cite{A1} for more details).  

Consider two consecutive columns of $\mathrm{HTG}(m,n,\ell)$ subscripted by $i\mbox{ and }i+1$,
and the flat edges $[u_{i,t_1},u_{i+1,t_1}], [u_{i,t_2},u_{i+1,t_2}],\ldots,[u_{i,t_k},u_{i+1,t_k}]$,
where $0\leq t_1<t_2< \cdots<t_k\leq n-1$.  Subdivide each of these flat edges twice so that the
edge $[u_{i,t_j},u_{i+1,t_j}]$ becomes the path $[u_{i,t_j},u_{a,t_j},u_{b,t_j},u_{i+1,t_j}]$.
Now add vertices to form columns subscripted by $a$ and $b$ with the obvious subscripts.

Remove each edge $[u_{a,t_j},u_{b,t_j}]$ from the corresponding 3-path and replace it with a path
from $u_{a,t_j}$ to $u_{b,t_j}$ by adding the vertical path from $u_{a,t_j}$ down to
$u_{a,1+t_{j-1}}$ followed by the edge $[u_{a,1+t_{j-1}},u_{b,1+t_{j-1}}]$ and then back
up column $b$ to $u_{b,t_j}$.  We then obtain paths from $u_{i,t_j}$ to $u_{i+1,t_j}$ that use
all the vertices of columns $a$ and $b$.  This operation is called the {\it vertical downward fill} for
columns $i$ and $i+1$.  The {\it vertical upward fill} is defined in an obvious analogous manner. 

The preceding does not work for $m=1$ as there are no flat edges in this case.  What we do to go
from $\mathrm{HTG}(1,n,\ell)$ to $\mathrm{HTG}(3,n,\ell)$ is the following.  We add two new
columns subscripted by 1 and 2.  When $m=1$, $\ell$ is odd and a jump edge has the form
$[u_{0,j},u_{0,j+\ell}]$, where $j$ is odd.  Replace that jump edge in $\mathrm{HTG}(1,n,\ell)$
with the jump edge $[u_{2,j},u_{0,j+\ell}]$ in $\mathrm{HTG}(3,n,\ell)$ and add the flat edge
$[u_{0,j},u_{1,j}]$.  It is obvious how to carry out vertical fills to obtain an $\mathrm{HTG}(3,n,\ell)$.
See Figure 1 below for an example.  

\section{One Column}

As noted above, we add two columns at a time which makes $m=1$ and $m=2$ base cases for
subsequent results.  We consider $m=1$ in this section.  Of course, $\ell$ is odd when $m=1$.
We ignore $\ell=1$ as $\mathrm{HTG}(1,n,1)$ is a multigraph.  We completely settle the cases
for $\ell=3$ and $\ell=5$.  For $\ell\geq 7$ we establish an asymptotic result.  An important convention
we adopt for this section is that we denote $u_{0,j}$ by $v_j$ for all $0\leq j\leq n-1$ in order to
simplify the notation.

We first give two easy negative results which are used later and illustrate basic techniques.

\begin{prop}\label{half} When $n\equiv 2(\mbox{\em mod }4)$, $\mathrm{HTG}(1,n,n/2)$
is not 2-spanning cyclable.
\end{prop}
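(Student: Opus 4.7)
The plan is to show that the pair $u=v_0$ and $v=v_{n/2}$ cannot be separated. Observe first that because $n\equiv 2\pmod 4$, $n/2$ is odd, so the parity condition in Definition \ref{hc} (with $m=1$) makes the jump edges of $\mathrm{HTG}(1,n,n/2)$ precisely $\{[v_j,v_{j+n/2}]:j\text{ odd}\}$. Since $j$ and $j+n/2$ have opposite parities, these $n/2$ edges form a perfect matching in which every vertex $v_k$ is paired with its ``antipode'' $v_{k+n/2}$ (indices mod $n$). In particular $[v_0,v_{n/2}]$ is a jump edge.

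Suppose for contradiction there is a 2-factor $F$ whose two cycles $C_1,C_2$ satisfy $v_0\in C_1$ and $v_{n/2}\in C_2$. The edge $[v_0,v_{n/2}]$ has its endpoints in distinct cycles and so cannot lie in $F$; consequently $v_0$ is forced to use both of its vertical edges, placing $v_{n-1}$ and $v_1$ in $C_1$. By the identical reasoning applied at $v_{n/2}$, the vertices $v_{n/2-1}$ and $v_{n/2+1}$ lie in $C_2$. This is the base case of an induction on $k$, the claim being that $v_{-k}\in C_1$ and $v_{n/2-k}\in C_2$ for every $k\geq 0$. The inductive step is just the same observation applied to the jump-partners $v_{-k}$ and $v_{n/2-k}$: since these are in different cycles their jump edge is absent from $F$, each is forced to use both of its vertical edges, and we conclude $v_{-k-1}\in C_1$ and $v_{n/2-k-1}\in C_2$.

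Setting $k=n/2$ in the resulting claim gives $v_{n/2}=v_{-n/2}\in C_1$, contradicting $v_{n/2}\in C_2$. Hence no 2-factor separates $\{v_0,v_{n/2}\}$, and $\mathrm{HTG}(1,n,n/2)$ is not 2-spanning cyclable. The only delicate point is the parity bookkeeping at the outset that pins down the form of the jump edges and certifies that every vertex has a unique jump partner exactly $n/2$ steps away; once that is secured, the inductive walk around the cycle is essentially automatic.
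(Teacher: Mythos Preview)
Your proof is correct and follows essentially the same approach as the paper: both show that $v_0$ and $v_{n/2}$ cannot be separated by observing that once the jump edge $[v_0,v_{n/2}]$ is excluded, the remaining jump edges (``diameters'') are forced out one after another, leaving only the single vertical cycle. Your version packages this forcing as an explicit induction and reaches the contradiction at $k=n/2$, and it has the minor advantage of treating $n=6$ uniformly rather than as a separate case; the paper's argument is the same idea stated more informally.
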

\begin{proof} The graph $\mathrm{HTG}(1,6,3)$ is not 2-spanning cyclable because it is bipartite
and there is no 2-factor consisting of two cycles.  We assume that $n\geq 10$.  We claim there is
no 2-factor separating $v_0$ and $v_{n/2}$.  If there were, then there would be a cycle $C_1$ containing
$v_0$ and a distinct cycle $C_2$ containing $v_{n/2}$.  Thus, the edge $[v_0,v_{n/2}]$ is not used
implying that the 2-path $[v_{n-1},v_0,v_1]$ is contained in $C_1$ and the path $[v_{n/2-1},v_{n/2},v_{n/2+1}]$
lies in $C_2$.  

This, of course, forces both diameters $[v_1,v_{n/2+1}]$ and $[v_{n-1},v_{n/2-1}]$ to not be used.
It is now easy to see that this continues and no diameter appears in either $C_1$ or $C_2$ which
contradicts that $C_1$ and $C_2$ are distinct.     \end{proof}

\begin{prop}\label{ahalf} When $n\equiv 0(\mbox{\em mod }4)$ and $n\neq 8$, $\mathrm{HTG}(1,n,(n-2)/2)$
is not 2-spanning cyclable.
\end{prop}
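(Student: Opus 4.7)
The plan is to exhibit a specific non-separable pair, namely $v_0$ and $v_{n-1}$, and derive a contradiction by forced-edge propagation combined with a small case split. Throughout, note that $\mathrm{HTG}(1,n,(n-2)/2)$ contains the 4-cycle $Q=[v_0,v_1,v_{n/2},v_{n/2+1},v_0]$ with edges the verticals $[v_0,v_1]$, $[v_{n/2},v_{n/2+1}]$ and the jumps $[v_1,v_{n/2}]$, $[v_{n/2+1},v_0]$, together with the symmetric 4-cycle $Q'=[v_{n-1},v_{n-2},v_{n/2-1},v_{n/2-2},v_{n-1}]$ around $v_{n-1}$.

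Suppose toward a contradiction that $F$ is a 2-factor with cycles $C_A\ni v_0$ and $C_B\ni v_{n-1}$. Since $v_0$ and $v_{n-1}$ are adjacent, $[v_0,v_{n-1}]\notin F$, so the remaining two edges at each of these vertices must lie in $F$: that is, $[v_0,v_1],\,[v_{n/2+1},v_0],\,[v_{n-1},v_{n-2}],\,[v_{n-1},v_{n/2-2}]\in F$. In particular $v_1,v_{n/2+1}\in C_A$ and $v_{n-2},v_{n/2-2}\in C_B$. I would then split into two cases depending on whether all four edges of $Q$ belong to $F$.

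In Case 1 ($Q\subseteq F$), $C_A=Q$ and $C_B$ must be a Hamilton cycle of $G':=\mathrm{HTG}(1,n,(n-2)/2)-V(Q)$. I would check directly that the degree-$2$ vertices of $G'$ are exactly $v_2,v_{n/2+2},v_{n-1},v_{n/2-1}$ (the ones with a neighbor in $V(Q)$), and that their forced incident edges comprise two $4$-cycles, one on $\{v_2,v_3,v_{n/2+2},v_{n/2+3}\}$ and the other on $\{v_{n-2},v_{n-1},v_{n/2-1},v_{n/2-2}\}=V(Q')$. For $n\geq 12$ these two 4-cycles are vertex-disjoint, so any Hamilton cycle of $G'$ would contain each of them as a subgraph; at every vertex of a forced 4-cycle the Hamilton-cycle degree $2$ is already used up, so these $8$ vertices split off as two disjoint $4$-cycles in the Hamilton cycle, contradicting connectedness. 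Precisely when $n=8$ the two forced $4$-cycles coincide and form a single Hamilton cycle of $G'$, which is exactly why $n=8$ must be excluded.

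In Case 2 ($Q\not\subseteq F$), the missing edge of $Q$ is $[v_1,v_{n/2}]$ or $[v_{n/2},v_{n/2+1}]$. If both are missing, then $v_{n/2}$ has only the edge $[v_{n/2-1},v_{n/2}]$ available for $F$, contradicting degree $2$. Otherwise, by examining the two $F$-edges at $v_{n/2}$ one finds that $[v_{n/2-1},v_{n/2}]\in F$ together with whichever of $[v_1,v_{n/2}]$ or $[v_{n/2},v_{n/2+1}]$ does lie in $F$; this places $v_{n/2}$ in the same cycle as $v_1$ or $v_{n/2+1}$, forcing $v_{n/2-1}\in C_A$. The remaining $F$-edge at $v_{n/2-1}$ must be either $[v_{n/2-2},v_{n/2-1}]$ or the jump $[v_{n-2},v_{n/2-1}]$, placing $v_{n/2-2}$ or $v_{n-2}$ into $C_A$ and contradicting the fact established above that both lie in $C_B$. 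The main obstacle is Case 1: carrying out the degree-$2$ forced-edge bookkeeping cleanly and observing that the disjointness of the two resulting $4$-cycles (which holds iff $n\geq 12$) is exactly what blocks the required Hamilton cycle.
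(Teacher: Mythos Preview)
Your argument is correct, but it takes a different route from the paper's. The paper chooses the adjacent pair $v_1,v_2$ rather than $v_0,v_{n-1}$. With that choice the propagation is completely linear: removing $[v_1,v_2]$ forces the $2$-paths $[v_0,v_1,v_{n/2}]\subset C_1$ and $[v_{n/2+3},v_2,v_3]\subset C_2$; then $v_{n/2+2}$ (two of whose neighbours, $v_3$ and $v_{n/2+3}$, already lie in $C_2$) is forced into $C_2$, and symmetrically $v_{n/2+1}$ into $C_1$. The edge $[v_{n/2+1},v_{n/2+2}]$ is therefore forbidden, and both $C_1$ and $C_2$ collapse immediately to $4$-cycles, which fail to span once $n>8$. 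No case split is needed.

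Your choice of $v_0,v_{n-1}$ works too, but it costs you the dichotomy on whether the $4$-cycle $Q$ lies entirely in $F$. Case~2 is quick, and your Case~1 analysis---forcing two disjoint $4$-cycles inside $C_B$ for $n\geq 12$---is sound and has the pleasant feature of making the $n=8$ exception transparent (the two forced $4$-cycles coincide exactly then). The paper's pair simply short-circuits this: by picking two vertices that each sit on one of those forced $4$-cycles, both cycles of the $2$-factor are pinned down simultaneously without ever having to look at the complement graph.
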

\begin{proof} When $n=4$, $\mathrm{HTG}(1,4,1)$ is a multigraph and also is not 2-spanning cyclable.
It is easy to verify that $\mathrm{HTG}(1,8,3)$ is 2-spanning cyclable so we assume $n\geq 12.$  We now
show there is no 2-factor separating $v_1$ and $v_2$.

Because the edge $[v_1,v_2]$ cannot be used, the cycle $C_1$ containg $v_1$ must contain the 2-path
$[v_0,v_1,v_{n/2}]$ and the cycle $C_2$ containing $v_2$ must contain the 2-path $[v_{(n+6)/2},
v_2,v_3]$.  Two of the edges incident with $v_{(n+4)/2}$ also are incident with vertices of $C_2$ which
implies $v_{(n+4)/2}$ is on $C_2$.  Similarly, $v_{(n+2)/2}$ is on $C_1$.  Hence, the edge
$[v_{(n+2)/2},v_{(n+4)/2}]$ cannot be used.  This forces both $C_1$ and $C_2$ to be 4-cycles.  This
is a contradiction as $n>8$.  The conclusion follows.     \end{proof}

\medskip

Some words about verifying 2-spanning cyclability are in order.  The graph $\mathrm{HTG}(m,n,\ell)$
is vertex-transitive from which it follows that it suffices to show there is a 2-factor separating $u_{0,0}$ and an arbitrary
distinct vertex of the graph.   The permutation $\rho$ which rotates the vertices two positions, that is, 
$\rho(u_{i,j})=u_{i,j+2}$ clearly is an automorphism of $\mathrm{HTG}(m,n,\ell)$.  Similarly, the
permutation $\pi$ defined by $\pi(u_{i,j})=u_{i+1,j+1}$ for $i\neq m-1$, and  $\pi(u_{m-1,j})=u_{0,j+1+\ell}$
is an automorphism.

The group $G=\langle\rho,\pi\rangle$ generated by $\rho$ and $\pi$ is abelian and has two orbits.  One orbit 
contains the vertices whose subscripts have even sum and the other orbit contains the vertices whose subscripts 
have odd sum.  The action of $G$ restricted to each orbit is regular, that is, for two vertices $u_{i,j}$ and
$u_{i',j'}$ in the same orbit, there is a unique element of $G$ mapping $u_{i,j}$ to $u_{i',j'}$.  Finally,
the vertical cycles form a block system for $G$ and the image of any vertical cycle maintains the cyclic
order of the vertices under the action of any element of $G$.  This is very useful.  For example, if we have a 2-factor
separating $u_{i,j}$ and $u_{i,j+1}$, where $i+j$ is even, then there is a 2-factor separating $u_{0,0}$
and $u_{0,1}$.  It is obtained by letting $g\in G$ act on the 2-factor where $g(u_{i,j})=u_{0,0}$.

\begin{lemma}\label{O3} The honeycomb toroidal graph $\mathrm{HTG}(1,n,3)$ is
2-spanning cyclable if and only if $n\equiv 0(\mbox{mod }4)$ and $n>6$.
\end{lemma}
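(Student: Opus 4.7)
My plan is to prove the equivalence direction by direction.

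For necessity, three subcases arise. The case $n=4$ is excluded because $\mathrm{HTG}(1,4,3)$ is a multigraph. The case $n=6$ is excluded by Proposition~\ref{half} since $3=n/2$. For $n\equiv 2\pmod 4$ with $n\geq 10$, I would prove that the pair $(v_0, v_{n-1})$ admits no separating 2-factor, by a forced-edge propagation in the style of Propositions~\ref{half} and~\ref{ahalf}. Since the edge $[v_0,v_{n-1}]$ cannot appear, the cycle $C_1\ni v_0$ must contain $[v_0,v_1]$ and $[v_0,v_{n-3}]$, and the cycle $C_2\ni v_{n-1}$ must contain $[v_{n-1},v_{n-2}]$ and $[v_{n-1},v_2]$. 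At each newly reached vertex its third edge is forced, because one of its two remaining neighbor candidates is already assigned to the opposite cycle. Propagating symmetrically from both $v_0$ and $v_{n-1}$ on both sides, the forcing partitions all $n$ vertices into $|C_1|=|C_2|=n/2$. Since $\mathrm{HTG}(1,n,3)$ is bipartite with parts determined by index parity, every cycle in a 2-factor has even length; but $n/2$ is odd when $n\equiv 2\pmod 4$, contradicting the existence of $C_1$.

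For sufficiency, assume $n\equiv 0\pmod 4$ and $n\geq 8$. By the vertex-transitivity remarks preceding the lemma, it suffices to construct, for each $j\in\{1,\ldots,n-1\}$, a 2-factor separating $v_0$ and $v_j$. I would exploit the two 4-cycles through $v_0$
\[
A=(v_0,v_1,v_2,v_{n-1},v_0) \quad \mbox{and} \quad B=(v_0,v_{n-1},v_{n-2},v_{n-3},v_0),
\]
which exist for every $n\geq 6$ thanks to the jump edges $[v_{n-1},v_2]$ and $[v_{n-3},v_0]$. I would then verify that the induced subgraph on the complement of $V(A)$ (respectively $V(B)$) admits a Hamilton cycle, described by an explicit zig-zag that uses the jump edges sitting within the complementary strip. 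Pairing $A$ (respectively $B$) with such a Hamilton cycle yields a 2-factor separating $v_0$ from any $v_j\notin V(A)$ (respectively $v_j\notin V(B)$); together these cover every $j\in\{1,\ldots,n-2\}$. The only remaining pair is $(v_0, v_{n-1})$, for which I would exhibit an explicit 2-factor. Indeed, the forced-edge analysis used in the necessity direction, when $n\equiv 0\pmod 4$, closes up cleanly into two disjoint cycles of length $n/2$ each, giving precisely the required 2-factor.

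The main obstacle is the sufficiency direction: producing explicit Hamilton cycles in the complements of $A$ and $B$ that work uniformly for all $n\equiv 0\pmod 4$ with $n\geq 8$, likely with separate formulas for $n\equiv 0\pmod 8$ and $n\equiv 4\pmod 8$. The small case $n=8$ (where the complements themselves are 4-cycles) must be verified directly, and similar attention may be needed for $n=12$ where the generic formula might degenerate.
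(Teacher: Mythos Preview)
Your plan is essentially the paper's own proof. The paper's $C_1$ is your $B$, and the complementary cycle $C_2$ is built uniformly for all even $n\ge 8$ by starting with $[v_1,v_2,v_3,v_4,v_1]$ and repeatedly replacing an edge $[v_{2k-1},v_{2k}]$ by the detour $[v_{2k-1},v_{2k+2},v_{2k+1},v_{2k}]$; no split into $n\equiv 0$ versus $n\equiv 4\pmod 8$ is needed, and $n=8,12$ are not special. Your second $4$-cycle $A$ is just $\rho(B)$, so invoking $A$ is the same as the paper's device of applying $\rho^{-1}$ to $C_1\cup C_2$. The pair $(v_0,v_{n-1})$ is handled in both proofs by the same forced-edge propagation.

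One correction to your necessity argument: for $n\equiv 2\pmod 4$ the forcing does \emph{not} close up into two cycles of (odd) length $n/2$. Tracing forward from $[v_{n-3},v_0,v_1]$ puts $v_0,v_1,v_4,v_5,\ldots$ (indices $\equiv 0,1\pmod 4$) into $C_1$, while the backward trace from $v_{n-3}$ forces $v_{n-3},v_{n-4},v_{n-7},\ldots$ (indices $\equiv 3,2\pmod 4$ when $n\equiv 2$) into $C_1$; these two chains meet at a vertex whose two remaining neighbours are already committed to $C_2$, so the propagation aborts rather than producing odd cycles. The bipartite parity contradiction you cite never materialises. The fix is immediate---just say the two chains collide, which is exactly how the paper phrases it---but as written your contradiction is not the one that actually occurs.
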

\begin{proof} When $n=4$ or $n=6$, $\mathrm{HTG}(1,n,3)$ does not even have a 2-factor
composed of two cycles because it is bipartite.  So we restrict ourselves to $n\geq 8$.  Let $C_1$
be the 4-cycle $[v_0,v_{n-3},v_{n-2},v_{n-1},v_{0}]$.  Start a second cycle with the
4-cycle $[v_1,v_2,v_3,v_4,v_1]$.  If $n=8$, we stop.  If $n>8$,
replace the edge $[v_3,v_4]$ with the path $[v_3,v_6,v_5,v_4]$
producing a cycle of length 6.  It is clear that we may continue in this manner until we reach a
2-factor composed of $C_1$ and a second cycle $C_2$ of length $n-4$.  Note that $C_1$ and
$C_2$ separate $v_0$ and $v_j$ for $j=1,2,3,\ldots,n-4$. 

As mentioned earlier, we need only consider $v_0$ and the three
vertices $v_{n-1},v_{n-2}\mbox{ and }v_{n-3}$.  If we apply $\rho^{-1}$ to the preceding 2-factor, 
we obtain a 2-factor that separates $v_0$ and both $v_{n-2}$ and $v_{n-3}$.  

Thus, the crucial pair with regard to separation is $v_0$ and $v_{n-1}$.  We consider the
problem of creating cycles $C_3$ and $C_4$ such that $v_0\in V(C_3)$, $v_{n-1}\in V(C_4)$,
and $C_3$ and $C_4$ form a 2-factor.

$C_3$ must contain the subpath $[v_{n-3},v_0,v_1]$ and $C_4$ must contain the subpath
$[v_{n-2},v_{n-1},v_2]$ because the edge $[v_{n-1},v_0]$ cannot be used.  Then
the edge $[v_1,v_2]$ cannot be used which forces us to add the edge $[v_1,v_4]$
to $C_3$ and the edge $[v_2,v_3]$ to $C_4$.  The edges continue to be forced and it is
easy to see that $C_3$ must continue to use $v_4, v_8, v_{12}$ and so on.  The only
way $C_3$ and $C_4$ do  not collide is if and only if $n\equiv 0(\mbox{mod }4)$.    \end{proof}

\medskip

Lemma \ref{O3} determines the exact values of $n$ for which $\mathrm{HTG}(1,n,3)$ is 2-spanning
cyclable.   When $\ell=5$, it is not difficult to show that $\mathrm{HTG}(1,n,5)$ fails to be
2-spanning cyclable only for $n\in\{10,12,14,20\}$ for $n\geq 10$.  The cases of $n=10,12$ are
covered by Propositions \ref{half} and \ref{ahalf}, other values are covered by Theorem \ref{alpha}
below, and the remaining values may be done manually.

This indicates it is likely that for each odd $\ell>3$, there are a few sporadic small values of $n$ for
which $\mathrm{HTG}(1,n,\ell)$ fails to be 2-spanning cyclable.  If this is the case, then trying to
determine the exact parameters $(1,n,\ell)$ for which the corresponding honeycomb toroidal graph
is 2-spanning cyclable becomes a tedious task.  Consequently, we complete this section in search of a
universal asymptotic result.

\begin{lemma}\label{even} If $x$ and $y$ are positive even integers satisfying $\mathrm{gcd}(x,y)=2$, 
then every even $n>\frac{xy}{2}-x-y$ may be written as a linear combination of $x$ and $y$ with nonnegative
coefficients.
\end{lemma}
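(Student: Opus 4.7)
The plan is to reduce the statement to the classical Sylvester–Frobenius (``Chicken McNugget'') theorem, which says that if $a$ and $b$ are positive integers with $\gcd(a,b)=1$, then every integer $m > ab - a - b$ is expressible as $\alpha a + \beta b$ with nonnegative integer coefficients $\alpha,\beta$.

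First I would write $x = 2a$ and $y = 2b$. The hypothesis $\gcd(x,y)=2$ is equivalent to $\gcd(a,b) = 1$. Since $n$ is even, write $n = 2m$. The bound $n > \frac{xy}{2} - x - y$ translates to $2m > 2ab - 2a - 2b$, that is, $m > ab - a - b$.

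At this point I would invoke the classical Sylvester–Frobenius result to conclude that there exist nonnegative integers $\alpha,\beta$ with $m = \alpha a + \beta b$. Multiplying by $2$ gives
\[
n = 2m = \alpha(2a) + \beta(2b) = \alpha x + \beta y,
\]
which is the desired representation.

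The only possible obstacle is whether the authors want a self-contained proof rather than an appeal to Sylvester–Frobenius. If so, I would sketch the standard argument: consider the $a$ residue classes modulo $a$ among the numbers $0, b, 2b, \ldots, (a-1)b$; since $\gcd(a,b)=1$ these cover all residues mod $a$, so for any target $m$ there is a smallest nonnegative $\beta < a$ with $\beta b \equiv m \pmod a$, and then $m - \beta b$ is a nonnegative multiple of $a$ whenever $m \geq (a-1)b - (a-1) \cdot 1$ suffices, i.e., whenever $m > ab - a - b$. This elementary argument carries over directly to the even-integer setting after the substitution $x=2a$, $y=2b$, so no genuine difficulty arises.
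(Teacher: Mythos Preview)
Your proof is correct and is essentially identical to the paper's: both halve $x$, $y$, and $n$ to reduce to the classical Sylvester--Frobenius theorem for coprime integers (the paper cites \cite{W1} for this), then double back. The optional self-contained sketch you include is not in the paper but is a standard argument and causes no issues.
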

\begin{proof} Because $\mathrm{gcd}(x/2,y/2)=1$, from \cite{W1} we know that the largest integer not
expressible as a linear combination of $x/2$ and $y/2$ with nonnegative coefficients (the Frobenius number)
is $(xy-2x-2y)/4$.  Doubling this gives us the largest even integer not expressible as a linear combination of
$x$ and $y$ with nonnegative coefficients which completes the proof.    \end{proof}

\begin{thm}\label{alpha}  Let $\mathrm{HTG}(1,n,\ell)$ satisfy $\ell$ odd and $\ell>3$.  If $n=2\ell+\alpha$,
where $\alpha$ is a linear combination of $\ell+1$ and $3\ell+1$ with nonnegative coefficients and
$\alpha>0$, then $\mathrm{HTG}(1,n,\ell)$ is 2-spanning cyclable.
\end{thm}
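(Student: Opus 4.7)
The plan is to construct, for each target vertex $v_k$ distinct from $v_0$, an explicit 2-factor of $\mathrm{HTG}(1,n,\ell)$ consisting of exactly two cycles, with $v_0$ in one cycle and $v_k$ in the other. By vertex-transitivity and the translational action of the group $G=\langle\rho,\pi\rangle$ described just before Lemma~\ref{O3}, it suffices to describe a bounded family of base 2-factors and propagate them by translation to realize every admissible pair.

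The construction will use two species of \emph{gadgets} on the vertical cycle. The small gadget is the $(\ell+1)$-cycle $D_j=[v_j,v_{j+\ell},v_{j+\ell-1},\ldots,v_{j+1},v_j]$ (odd $j$), which uses the chord $[v_j,v_{j+\ell}]$ and a descending vertical arc; two adjacent such cycles $D_j,D_{j+\ell-1}$ share a single vertical edge and glue by symmetric difference into a $2\ell$-cycle on $2\ell$ consecutive vertices. The large gadget is a $(3\ell+1)$-vertex structure built from three chord edges interspersed with vertical arcs, designed to span $3\ell+1$ consecutive vertices of the main cycle while fitting along it in the same spirit as the small gadget. Given $\alpha=a(\ell+1)+b(3\ell+1)$, I would assemble one cycle of the 2-factor from $a$ small and $b$ large gadgets laid consecutively, while the complementary cycle is a $2\ell$-cycle on the remaining $2\ell$ contiguous vertices (itself the $2\ell$-cycle obtained by gluing two $D_j$'s above). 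Translation by an element of $G$ then positions the configuration so that $v_0$ falls into the $2\ell$-cycle and $v_k$ into the gadget cycle; the Frobenius statement of Lemma~\ref{even} is exactly what ensures the two gadget sizes can realize every admissible $\alpha$.

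The main obstacle will be designing the $(3\ell+1)$-gadget so that when it is placed next to other gadgets and to the $2\ell$-remainder, the result is exactly two cycles rather than three or more. The experience of Lemma~\ref{O3} shows how easily naive gluing fragments the 2-factor into additional components, so the gadget must be engineered so that successive pieces share an appropriate edge and the resulting symmetric differences merge cleanly. A secondary but unavoidable subtlety is that when $v_k$ lies close to $v_0$ on the main cycle there is little room to position the gadgets; these borderline positions will likely require separate but analogous constructions, possibly exploiting the reflection symmetry $v_j\mapsto v_{-j}$ of the graph.
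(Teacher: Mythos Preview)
Your overall strategy matches the paper's: fix one short cycle, build a complementary cycle from modular pieces whose sizes are $\ell+1$ and $3\ell+1$, and then translate by powers of $\rho$ to realise every target $v_k$. However, the concrete building blocks you propose do not assemble as claimed. If consecutive small gadgets $D_j$ are placed on disjoint blocks of $\ell+1$ vertices, they share no edge and remain separate cycles; if instead you overlap them as in your own example $D_j,D_{j+\ell-1}$, each new copy after the first contributes only $\ell-1$ new vertices, so $a$ overlapping copies span $2+a(\ell-1)$ vertices rather than $a(\ell+1)$. Either way the long cycle does not have $\alpha$ vertices, and the $(3\ell+1)$-gadget, which you leave undesigned, would have to absorb this discrepancy in some non-obvious way.

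The paper sidesteps this by using \emph{paths} rather than cycles as the basic pieces. Its $C_1$ is the fixed $6$-cycle $[v_{\ell-3},v_{\ell-2},v_{\ell-1},v_{n-1},v_{n-2},v_{n-3},v_{\ell-3}]$, and $C_2$ starts with an explicit path on $2\ell-6$ vertices and is then extended by concatenating paths $P(x)$ of length $\ell+1$ and $Q(x)$ of length $3\ell+1$. The crucial device is that each $P(x)$ (respectively $Q(x)$) deliberately leaves a gap among the vertices it passes through, and that gap is filled exactly by the initial segment of the next path attached; this is what makes the lengths add up to $\alpha$ while keeping everything in a single cycle. A further benefit of taking a $6$-cycle rather than your $2\ell$-cycle for $C_1$ is that the endgame---separating $v_0$ from the few vertices on $C_1$---is handled by applying three or four specific powers of $\rho$ to the single $2$-factor $C_1\cup C_2$, so the ``separate but analogous constructions'' you anticipate are never needed.
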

\begin{proof} Let $\mathrm{HTG}(1,n,\ell)$ satisfy the hypotheses.  Define $C_1$ to be the 6-cycle
$[v_{\ell-3},v_{\ell-2},v_{\ell-1},v_{n-1},v_{n-2},v_{n-3},v_{\ell-3}]$.  We now show 
there is a single cycle $C_2$ containing the remaining $n-6$ vertices, thereby giving us a 2-factor.

Two paths which play a central role in the construction of $C_2$ are defined as follows. For $x$
even, the path $$P(x)=[v_x,v_{x-1},v_{x-2},\dots,v_{x-\ell+4},v_{x+4},v_{x+3},v_{x+2},v_{x+1},v_{x+1+\ell}]$$
and the path

\medskip 

$Q(x)=[v_x,v_{x-1},v_{x-2},\ldots,v_{x-\ell+4},v_{x+4},v_{x+5},\ldots,v_{x+\ell+1},
v_{x+1},v_{x+2},v_{x+3},\\
v_{x+3+\ell},v_{x+2+\ell},v_{x+2+2\ell},v_{x+3+2\ell},v_{x+4+2\ell},v_{x+4+\ell},v_{x+5+\ell},
\ldots,v_{x+1+2\ell},v_{x+1+3\ell}]$.

\medskip

\noindent Note that $P(x)$ has length $\ell+1$ and $Q(x)$ has length $3\ell+1$.

Start building $C_2$ with the path $[v_{n-4},v_{n-5},\ldots,v_{n-\ell},v_0,v_1,v_2,\ldots,v_{\ell-4}].$
Note that the unused vertices are $v_{\ell},v_{\ell+1},v_{\ell+2},\ldots,v_{n-\ell-1}$ which is a segment
of $\alpha$ consecutive vertices because $n-\ell=\ell+\alpha$.  So we need a path of length $\alpha+1$
from $v_{\ell-4}$ to $v_{n-4}$ passing through all the unused vertices.  We start by adding the edge 
$[v_{\ell-4},v_{2\ell-4}]$ to the current termination of $C_2$.  Now we need a path of length $\alpha$ 
from $v_{2\ell-4}$ to $v_{n-4}$ using the remaining unused vertices.  Fortunately $\alpha$ is a linear 
combination of $\ell+1$ and $3\ell+1$ with nonnegative coefficients which means we may use $P(x)$ and 
$Q(x)$ paths.  A careful look at these paths is appropriate.

The path $P(x)$ uses all the vertices with increasing subscript from $v_{x-\ell+4}$ to $v_{x+4}$ plus
the vertex $v_{x+1+\ell}$ leaving a gap between $v_{x+4}$ and $v_{x+1+\ell}$. The path $Q(x)$ uses
all the vertices with increasing subscript from $v_{x-\ell+4}$ to $v_{x+4+2\ell}$ plus the vertex 
$v_{x+1+3\ell}$. If we attach the path $P(x+1+\ell)$ or $Q(x+1+\ell)$ to $P(x)$, the gap created by 
$P(x)$ is filled in addition to the segment it covers.  Similarly, if we attach the path $Q(x+1+3\ell)$ to
$Q(x)$, the gap created by $Q(x)$ is filled in addition to the segment it covers.  

Let $\alpha=\beta(\ell+1)+\gamma(3\ell+1)$, where $\beta,\gamma,\geq 0$ and at least one of them is
positive.  If $\beta>0$, continue $C_2$ by adding the path $P(2\ell-4)$.  This path terminates at $v_{3\ell-3}$.
If $\beta>1$, then we attach $P(3\ell-3)$.  If $\beta=1$, we attach $Q(3\ell-3)$.  We then continue this
process until we have attached $\beta$ $P(x)$ paths and $\gamma$ $Q(x)$ paths.  If $\beta=0$, we
begin the process with $Q(2\ell-4)$.  

Because the process is continually filling in consecutive vertices other than the last edge at each iteration
and $\beta(\ell+1)+\gamma(3\ell+1)=\alpha$, after $\beta+\gamma$ iterations, the largest subscripted
vertex is $v_{n-\ell-1}$ and the terminal vertex before adding the very last edge is $v_{n-\ell-4}$.
Adding the last edge takes us to $v_{n-4}$ producing the cycle $C_2$.

The cycles $C_1$ and $C_2$ separate $v_0$ and the six vertices of $C_1$.  The 2-factor made up of
the two cycles $\rho(C_1)$ and $\rho(C_2)$ separates $v_0$ from all remaining vertices except 
$v_1,v_{\ell}\mbox{ and }v_{\ell+1}$.  The 2-factor formed by $\rho^2(C_1)$ and $\rho^2(C_2)$
separate $v_0$ and $v_1$.  Using the power of $\rho$ that takes $v_{\ell-3}$ to $v_0$ yields a
2-factor separating $v_0$ from both $v_{\ell}$ and $v_{\ell+1}$.  This completes the proof.
\end{proof}

\begin{cor}\label{asymp} If odd $\ell>3$, then for all $n>\frac{3\ell^2-4\ell-3}{2}$, 
$\mathrm{HTG}(1,n,\ell)$ is 2-spanning cyclable.
\end{cor}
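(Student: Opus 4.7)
The plan is to derive Corollary \ref{asymp} directly from Lemma \ref{even} combined with Theorem \ref{alpha}. The strategy is: apply Lemma \ref{even} with $x = \ell+1$ and $y = 3\ell+1$ to show that every sufficiently large even positive integer serves as a valid $\alpha$ in Theorem \ref{alpha}, and then invoke that theorem with $n = 2\ell + \alpha$.

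First I would verify the hypotheses of Lemma \ref{even} for the pair $(x,y) = (\ell+1, 3\ell+1)$. Because $\ell$ is odd, both numbers are positive and even. The identity $3(\ell+1) - (3\ell+1) = 2$ shows that any common divisor of $x$ and $y$ divides $2$, and since both numbers are even we must have $\gcd(x,y) = 2$. Hence Lemma \ref{even} applies and tells us that every even integer exceeding $\frac{xy}{2} - x - y$ is a nonnegative linear combination of $x$ and $y$. A short calculation simplifies this threshold to $\frac{3\ell^2 - 4\ell - 3}{2}$.

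Next I would set $\alpha = n - 2\ell$. Since $n$ is even and $2\ell$ is even, $\alpha$ is even. The hypothesis on $n$ in Corollary \ref{asymp} forces $\alpha$ to be positive and to exceed the threshold above, so we may write $\alpha = \beta(\ell+1) + \gamma(3\ell+1)$ with $\beta, \gamma \geq 0$ and at least one of them positive. Then $n = 2\ell + \alpha$ meets the hypotheses of Theorem \ref{alpha}, whose conclusion is precisely the 2-spanning cyclability of $\mathrm{HTG}(1,n,\ell)$.

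I do not expect any real obstacle; the corollary is essentially a bookkeeping exercise that stitches together the gcd computation with the Frobenius-type bound from Lemma \ref{even}. The only point deserving a little care is verifying that $\gcd(\ell+1, 3\ell+1) = 2$ rather than some larger even number, which the identity above handles cleanly, and making sure the threshold on $\alpha$ is translated correctly into the threshold on $n = 2\ell + \alpha$.
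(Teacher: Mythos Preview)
Your approach mirrors the paper's exactly: verify $\gcd(\ell+1,3\ell+1)=2$, apply Lemma~\ref{even} to the pair $(\ell+1,3\ell+1)$, set $\alpha=n-2\ell$, and invoke Theorem~\ref{alpha}. The gcd computation via $3(\ell+1)-(3\ell+1)=2$ is clean, and your computation of the Frobenius threshold $T=\frac{xy}{2}-x-y=\frac{3\ell^2-4\ell-3}{2}$ is correct.

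The gap is in your translation step. You assert that the hypothesis $n>\frac{3\ell^2-4\ell-3}{2}$ forces $\alpha=n-2\ell$ to exceed the threshold $T=\frac{3\ell^2-4\ell-3}{2}$. It does not: subtracting $2\ell$ only gives $\alpha>T-2\ell=\frac{3\ell^2-8\ell-3}{2}$, which is strictly weaker. Concretely, take $\ell=5$ and $n=30$: then $n>26$ holds, yet $\alpha=20$ is \emph{not} a nonnegative combination of $\ell+1=6$ and $3\ell+1=16$, so Theorem~\ref{alpha} does not apply and your argument breaks down.

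The paper's own proof sets up the inequality correctly as $n>2\ell+T$ before appealing to Lemma~\ref{even}, and only afterwards claims that ``carrying out the algebra'' gives $\frac{3\ell^2-4\ell-3}{2}$. In fact $2\ell+T=\frac{3\ell^2-3}{2}$, so the stated bound in the corollary appears to be off by $2\ell$; the argument as written (both yours and the paper's) really establishes the conclusion only for $n>\frac{3\ell^2-3}{2}$. You have not introduced a new error so much as failed to catch one already present in the statement, but your write-up makes the faulty implication explicit rather than hiding it behind ``carrying out the algebra.''
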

\begin{proof} If $n>2\ell+\frac{(\ell+1)(3\ell+1)}{2}-(\ell+1)-(3\ell+1)$, then $n>2\ell+\alpha$, 
where $\alpha>\frac{(\ell+1)(3\ell+1)}{2}-(\ell+1)-(3\ell+1)$.  By Lemma \ref{even}, $\alpha$ 
may be written as a linear combination of $\ell+1$ and $3\ell+1$ with nonnegative coefficients.  
Theorem \ref{alpha} then implies that $\mathrm{HTG}(1,n,\ell)$ is 2-spanning cyclable.  Carrying
out the algebra yields $\frac{3\ell^2-4\ell-3}{2}$.    \end{proof}

\bigskip

The lower bound is not tight.  To see this recall the information about $\ell=5$.  The lower bound from
Corollary \ref{asymp} is 26, whereas $\mathrm{HTG}(1,n,5)$ is 2-spanning cyclable for all even 
$n\geq 22$.
 
\section{Two Columns}

The outline for this section is the same as that for the preceding section.  Exact results are obtained for
the two smallest values of $\ell$.  This is followed by a result which enables us to obtain an asymptotic
result for all other even values of $\ell$.  We begin with the following immediate result.

\begin{lemma}\label{zero}  The graph $\mathrm{HTG}(2,n,0)$ is 2-spanning cyclable for all even $n\geq 4$.
\end{lemma}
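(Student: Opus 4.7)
The plan is to first observe that $\mathrm{HTG}(2,n,0)$ is exactly the prism graph $C_n\square K_2$: writing $a_j=u_{0,j}$ and $b_j=u_{1,j}$, the vertical edges make each of the two columns into an $n$-cycle, while the flat edges (at odd $j$) and the jump edges (at even $j$, valid since $m=2$ and $\ell=0$) together supply the rung $[a_j,b_j]$ at every index $j$. Once this prism structure is recognised, the lemma reduces to an elementary prism computation, so I would invoke vertex-transitivity to reduce to separating $a_0$ from an arbitrary second vertex $x$.

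If $x=b_s$ lies in the opposite column, I would take the 2-factor whose two cycles are the column $n$-cycles themselves (use every vertical edge and no rung); this trivially separates $a_0$ and $b_s$. If instead $x=a_s$ with $1\le s\le n-2$, I would form a 2-factor from the 4-cycle $C_1=[a_s,a_{s+1},b_{s+1},b_s,a_s]$ together with the $(2n-4)$-cycle $C_2$ built from the two long column arcs $a_{s+2},a_{s+3},\ldots,a_{s-1}$ and $b_{s-1},b_{s-2},\ldots,b_{s+2}$ closed through the rungs at positions $s-1$ and $s+2$. Since $s\ne 0,n-1$, the vertex $a_0$ lies on $C_2$ while $a_s\in C_1$, so the pair is separated.

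The only real wrinkle is the wrap-around case $s=n-1$, where the previous 4-cycle would then also contain $a_0$. For that single value of $s$ I would mirror the construction, taking $C_1=[a_0,a_1,b_1,b_0,a_0]$ to capture $a_0$ and building the analogous $(2n-4)$-cycle to capture $a_{n-1}$. Verifying that each construction is a genuine 2-factor is immediate: in each column the retained edges form a Hamiltonian path between the endpoints of the two rungs used, and gluing through those rungs yields the stated cycles. There is no deeper obstacle; recognising the prism structure and handling the boundary index $s=n-1$ are the only points requiring care.
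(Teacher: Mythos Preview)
Your argument is correct. The identification of $\mathrm{HTG}(2,n,0)$ with the prism $C_n\,\square\,K_2$ is accurate: flat edges supply the rungs at odd levels and jump edges (with $\ell=0$) supply them at even levels, so every level carries a rung. The two separating $2$-factors you describe---the pair of column cycles for opposite-column targets, and the $4$-cycle plus complementary $(2n-4)$-cycle for same-column targets---are genuine $2$-factors for all even $n\geq 4$, and your boundary case $s=n-1$ is handled cleanly. (In fact you could have avoided that last case entirely by using the reflection $a_j\mapsto a_{-j}$, $b_j\mapsto b_{-j}$, which reduces $s=n-1$ to $s=1$; but the direct treatment is fine.)

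As for comparison with the paper: there is nothing to compare. The paper states the lemma as an ``immediate result'' and gives no proof whatsoever, so your explicit write-up already goes beyond what the paper provides. Your recognition of the prism structure is precisely the reason the authors felt entitled to omit the argument.
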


When $m=2$, the value $\ell=2$ leads to negative results as does $\ell=3$ for $m=1$, but the big difference
is that $\ell=2$ is really negative as shown by the following result.

\begin{lemma}\label{two} The graph $\mathrm{HTG}(2,n,2)$ is 2-spanning cyclable if and only if $n=4$.
\end{lemma}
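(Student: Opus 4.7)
My approach rests on a decomposition of $\mathrm{HTG}(2,n,2)$ into $n/2$ vertex-disjoint 4-cycles. For each even $j\in\{0,2,\ldots,n-2\}$ the quadruple $u_{0,j-1},u_{1,j-1},u_{1,j-2},u_{0,j}$ spans a 4-cycle $B_j$ whose edges are the flat edge $[u_{0,j-1},u_{1,j-1}]$, the vertical edge $[u_{1,j-1},u_{1,j-2}]$, the jump edge $[u_{1,j-2},u_{0,j}]$, and the vertical edge $[u_{0,j},u_{0,j-1}]$. The boxes $B_0,B_2,\ldots,B_{n-2}$ partition the vertex set, while the remaining edges---the column-$0$ verticals $[u_{0,2k},u_{0,2k+1}]$ together with the column-$1$ verticals $[u_{1,2k+1},u_{1,2k+2}]$---form a perfect matching. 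Every vertex has exactly two box edges and one matching edge.

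For the sufficient direction ($n=4$), I would exhibit three 2-factors, each a union of two 4-cycles: (F1) the two column cycles; (F2) $B_0\cup B_2$; and (F3) $[u_{0,0},u_{0,1},u_{1,1},u_{1,2},u_{0,0}]\cup[u_{0,2},u_{0,3},u_{1,3},u_{1,0},u_{0,2}]$. Vertex-transitivity lets us fix one vertex at $u_{0,0}$, reducing the task to the seven remaining vertices: F1 separates $u_{0,0}$ from every vertex in column~$1$; F2 separates $u_{0,0}$ from $u_{0,1}$ and $u_{0,2}$; and F3 separates $u_{0,0}$ from $u_{0,3}$. All seven pairs are handled.

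For the necessary direction ($n\geq 6$) I claim no 2-factor separates $u_{0,0}$ and $u_{0,1}$. Assume $C_1\cup C_2$ is such a 2-factor with $u_{0,0}\in C_1$ and $u_{0,1}\in C_2$. The matching edge $[u_{0,0},u_{0,1}]$ is forbidden, so $u_{0,0}$ is forced to use both of its box edges in $B_0$, placing $u_{0,n-1}$ and $u_{1,n-2}$ in $C_1$; symmetrically both box edges of $B_2$ at $u_{0,1}$ lie in $C_2$, placing $u_{0,2}$ and $u_{1,1}$ in $C_2$. The argument now pivots on the remaining vertex $u_{1,n-1}$ of $B_0$, which must use two of its three incident edges: the box edges $[u_{1,n-2},u_{1,n-1}]$ and $[u_{1,n-1},u_{0,n-1}]$, or one box edge together with the matching edge $[u_{1,n-1},u_{1,0}]$.

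If both box edges at $u_{1,n-1}$ are used, then $B_0$ closes entirely inside $C_1$, forcing $C_1=B_0$; deleting $B_0$ and its four incident matching edges leaves $u_{0,1}$ and $u_{1,0}$ each with only two surviving edges (their box edges in $B_2$), and these forces propagate to make $B_2$ a 4-cycle inside $C_2$, contradicting that $C_2$ is a single cycle on $2n-4>4$ vertices. If instead the matching edge $[u_{1,n-1},u_{1,0}]$ is used, then $u_{1,0}$ is placed in $C_1$, but its two box neighbours $u_{1,1}$ and $u_{0,2}$ were already committed to $C_2$, so $u_{1,0}$ has no admissible second $C_1$-edge, again a contradiction. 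The first case is the main obstacle, and it is precisely the forced closure of $B_2$ that separates the $n\geq 6$ situation from the base case $n=4$, where $B_2$ is a perfectly legitimate second component of the 2-factor.
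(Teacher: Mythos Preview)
Your proof is correct and follows essentially the same route as the paper: forbid the edge $[u_{0,0},u_{0,1}]$, force the 2-paths through $u_{0,0}$ and $u_{0,1}$, and then argue that the edge $[u_{1,n-1},u_{1,0}]$ cannot be used either, which closes $C_1$ and $C_2$ as the 4-cycles you call $B_0$ and $B_2$. The paper reaches this last point in one stroke---observing that $u_{1,n-1}$ already has two neighbours in $C_1$ and $u_{1,0}$ two neighbours in $C_2$, so the edge between them is dead---whereas you perform a case split at $u_{1,n-1}$; your box decomposition and the explicit $n=4$ verification are pleasant extras the paper leaves to the reader.
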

\begin{proof} It is straightforward to verify that $\mathrm{HTG}(2,4,2)$ is 2-spanning cyclable.  Consider
$n\geq 6$.  Let $C_1$ and $C_2$ be two cycles of a 2-factor that separate $u_{0,0}$ and $u_{0,1}$, where $u_{0,0}$
belongs to $C_1$.  Then $C_1$ must contain the 2-path $[u_{0,n-1},u_{0,0},u_{1,n-2}]$ and $C_2$
must contain the 2-path $[u_{0,2},u_{0,1},u_{1,1}]$.  

The vertex $u_{1,n-1}$ must be in $C_1$ and the vertex $u_{1,0}$ must be in $C_2$.  Thus, the edge
$[u_{1,0},u_{1,n-1}]$ cannot be in the 2-factor.  This forces $C_1$ and $C_2$ to be 4-cycles which means
they cannot span $\mathrm{HTG}(2,n,2)$ as the order is at least 12.  This completes the proof.     \end{proof}

\medskip
 
It is not difficult to determine that when $\ell=4$, the only values of $n$ for which $\mathrm{HTG}(2,n,4)$
fails to be 2-spanning cyclable are $n=6$ and $n=10$.  All values of even $n$ for $n\geq 12$ are covered by the
next theorem.  We leave the details for the smaller values of $n$ to the reader with the hint that $u_{0,0}$ and 
$u_{0,9}$ cannot be separated in $\mathrm{HTG}(2,10,4)$.
The next result is the analogue of Theorem \ref{alpha} for $m=2$.  The proof is similar.

\begin{thm}\label{beta}  Let $\ell>2$ be even.  If $n=2\ell+\alpha$, where $\alpha$ is the sum of
nonnegative multiples of $\ell+2$ and $\ell$, then $\mathrm{HTG}(2,n,\ell)$ is 2-spanning cyclable.
\end{thm}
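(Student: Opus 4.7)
The plan is to mirror the argument of Theorem \ref{alpha}, with the two columns now replacing the single column used there. First I would construct a small ``anchor'' cycle $C_1$ on a handful of vertices in a neighborhood of $u_{0,0}$ (the analogue of the 6-cycle used at the start of that proof), built from vertical edges, one or two flat edges between columns $0$ and $1$, and a jump edge. The vertex set of $C_1$ will be chosen so that the remaining $2n-|V(C_1)|$ vertices form a structured region that can be traversed by a single Hamiltonian cycle $C_2$ of the induced subgraph.

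The engine is again a pair of building-block paths $P(x)$ of length $\ell$ and $Q(x)$ of length $\ell+2$. Each should visit a consecutive block of vertices, now distributed across both columns, using vertical and flat edges inside the block and exactly one jump edge to ``advance''; each should leave a predictable gap that gets plugged when the next $P$- or $Q$-block is attached at an appropriately shifted starting index. I would begin $C_2$ with a fixed initial path that uses all vertices outside an unvisited strip of width $\alpha$ spanning the two columns, then concatenate $\beta$ copies of $P$ and $\gamma$ copies of $Q$, where $\alpha=\beta\ell+\gamma(\ell+2)$ is supplied by the hypothesis. This yields a spanning path that closes into $C_2$. The resulting 2-factor $C_1\cup C_2$ separates $u_{0,0}$ from every vertex of $C_1$; the automorphisms $\rho$ and $\pi$ described in the introduction then let us translate $C_1\cup C_2$ to cover the remaining pairs, since by vertex transitivity it suffices to separate $u_{0,0}$ from representatives of each $G$-orbit and small powers of $\rho$ applied to $C_1\cup C_2$ sweep over the missing cases exactly as at the close of Theorem \ref{alpha}.

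The main obstacle will be the explicit design of $P(x)$ and $Q(x)$: in $\mathrm{HTG}(2,n,\ell)$ with $\ell$ even, flat edges exist only when $j$ is odd and jump edges only when $j$ is even, so the parities of the $j$-coordinates at every concatenation point must be compatible with the edge type needed there. Verifying that each of the concatenations $P\cdot P$, $P\cdot Q$, $Q\cdot P$, and $Q\cdot Q$ matches parities at the junction, and that the running subscripts stay coherent modulo $n$ so that the final attachment closes $C_2$ at the prescribed endpoint of the initial path, is where the bookkeeping concentrates. Getting these choices right is what forces the specific increments $\ell$ and $\ell+2$ rather than other natural candidates, and will likely consume the bulk of the proof.
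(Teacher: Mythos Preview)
Your plan is essentially the paper's proof, and it will go through. Two points where the paper's execution is cleaner than what you sketch, and which you would likely stumble into while carrying out the details:

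First, the paper opens with a one-line observation you omit: the two vertical $n$-cycles already form a 2-factor separating $u_{0,0}$ from every $u_{1,j}$. This instantly reduces the problem to separating $u_{0,0}$ from $u_{0,j}$ only, so the closing automorphism argument needs only powers of $\rho$, not $\pi$.

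Second, the paper's $C_1$ uses no jump edge; it is the $(2\ell-2)$-cycle on rows $n-1,0,1,\ldots,\ell-3$ built from vertical and flat edges alone. More importantly, the building blocks $P(x)$ and $Q(x)$ the paper actually uses start and end in \emph{different} columns: $P$ runs from $u_{0,x}$ to $u_{0,x+\ell+2}$ and $Q$ from $u_{1,x}$ to $u_{1,x+\ell}$. Consequently $P\cdot Q$ and $Q\cdot P$ do not concatenate directly, and your proposed check of all four junction types would fail at that step. The paper sidesteps this by fixing the order as $P^{\beta}\,R\,Q^{\gamma}$, where $R$ is $P$ with its last three edges deleted (so it terminates in column~1 after covering $\ell+1$ rows); this single transition piece also absorbs the extra $\ell+1$ rows left over after $C_1$, so no separate ``initial path'' inside $C_2$ is needed. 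Once you adopt this ordering the bookkeeping you anticipated collapses to nothing.
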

\begin{proof}  Let $\mathrm{HTG}(2,n,\ell)$ satisfy the hypotheses.  The two vertical cycles separate
the vertices of the respective columns.  It suffices to show that we can separate $u_{0,0}$ and any other
vertex of the form $u_{0,j}$, $j\neq 0$.  Let $$C_1=[u_{0,0},u_{0,n-1},u_{1,n-1},u_{1,0},u_{1,1},\ldots,
u_{1,\ell-3},u_{0,\ell-3},u_{0,\ell-4},\ldots,u_{0,0}].$$  

We claim there is a cycle $C_2$ through the  unused vertices.  Let $$P(x)=[u_{0,x},u_{0,x+1},\ldots,u_{0,x+\ell},
u_{1,x},u_{1,x+1},\ldots,u_{1,x+\ell+1},u_{0,x+\ell+1},u_{0,x+\ell+2}],$$ where $x$ is even.  Note
that $P(x)$ has length $2\ell+4$ and uses all the vertices of $\ell+2$ consecutive rows.  Let $R(x)$ be the
subpath of $P(x)$ obtained by deleting the last three edges so that it terminates at $u_{1,x+\ell}$ and uses all
the vertices on $\ell+1$ consecutive rows.  Finally,
let $$Q(x)=[u_{1,x},u_{0,x+\ell},u_{0,x+\ell-1},\ldots,u_{0,x+1},u_{1,x+1},u_{1,x+2},\ldots,u_{1,x+\ell}],$$
where $x$ is even.  This path has length $2\ell$ and uses all the vertices of $\ell$ consecutive rows.

The cycle $C_1$ has length $2\ell-2$ and uses all the edges of rows $0,1,\ldots,\ell-3$ and $n-1$.  This
leaves the vertices on rows $\ell-2$ through $n-2$ unused which consists of $\ell+1+\alpha$ consecutive
rows.  Let $\alpha=\beta(\ell+2)+\gamma\ell$, where $\beta,\gamma\geq 0$.

We now describe the cycle $C_2$.  If $\beta>0$, then start with $P(\ell-2)$ and juxtapose $\beta$ paths
of this type.  The path constructed so far terminates at $u_{0,\ell-2+\beta(\ell+2)}$ and uses the vertices
on $\beta(\ell+2)$ consecutive rows.  Now add the path $R(\ell-2+\beta(\ell+2))$ which uses the vertices
of the next $\ell+1$ consecutive rows.  Then juxtapose $\gamma$ paths of $Q$-type.  The path now
terminates at $u_{1,n-2}$ and adding the jump edge takes us back to $u_{0,\ell-2}$, thereby constructing
the cycle $C_2$.  If $\beta=0$, then start the path with $R(\ell-2)$ and complete it to a cycle in the obvious
way.  

We now examine separation properties.  In the special case that $\alpha=0$, the path $R(\ell-2)$ completes
to $C_2$ itself.  The 2-factor $F$ formed by $C_1\cup C_2$ separates $u_{0,0}$
and all vertices of the form $u_{0,j}$ for $j\in\{\ell-2,\ell-1,\ldots,n-2\}$.  Taking the power of $\rho$ which
maps $u_{0,\ell-4}$ to $u_{0,0}$, we see that $u_{0,0}$ is separated from $u_{0,j}$ for $j\in\{2,3,\ldots,
n-\ell+1\}$.  This leaves only $u_{0,0}$ and $u_{0,1}$.  They are separated because $u_{0,n-2}$ and
$u_{0,n-1}$ are separated by $F$.      \end{proof}

\begin{cor}\label{betamax} Let $\ell>2$ be even.  If $n>\frac{\ell^2+2\ell-4}{2}$, then $\mathrm{HTG}
(2,n,\ell)$ is 2-spanning cyclable.
\end{cor}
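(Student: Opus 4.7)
The plan is to derive this corollary from Theorem~\ref{beta} in exactly the same way that Corollary~\ref{asymp} was derived from Theorem~\ref{alpha}. Theorem~\ref{beta} asserts 2-spanning cyclability whenever $n = 2\ell + \alpha$ with $\alpha$ a nonnegative integer combination of $\ell$ and $\ell+2$, so the task reduces to identifying a clean threshold beyond which every even $\alpha$ admits such a representation. This is precisely the kind of Frobenius-number question handled by Lemma~\ref{even}.

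First I would apply Lemma~\ref{even} with $x=\ell$ and $y=\ell+2$. Since $\ell>2$ is even, both $\ell$ and $\ell+2$ are positive even integers with $\gcd(\ell,\ell+2)=2$, so the hypothesis of the lemma is met. The lemma then yields that every even integer exceeding
$$\frac{\ell(\ell+2)}{2} - \ell - (\ell+2) \;=\; \frac{\ell^2-2\ell-4}{2}$$
is expressible as a nonnegative combination of $\ell$ and $\ell+2$. Setting $\alpha = n - 2\ell$ (which is even, because $n$ is even), the condition $\alpha > \frac{\ell^2-2\ell-4}{2}$ rearranges to $n > 2\ell + \frac{\ell^2-2\ell-4}{2} = \frac{\ell^2 + 2\ell - 4}{2}$, matching the stated hypothesis. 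For $\ell\geq 4$ this bound already exceeds $2\ell$, so $\alpha>0$ automatically, and Theorem~\ref{beta} then applies to give the conclusion.

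There is no genuine obstacle here: the combinatorial substance has all been absorbed into Theorem~\ref{beta}, and the only remaining work is the arithmetic translation of the Frobenius bound from Lemma~\ref{even} into the bound on $n$. The minor points to be checked are that $\alpha$ is even (inherited from $n$ being even) and strictly positive, both of which are immediate under the stated hypothesis on $n$.
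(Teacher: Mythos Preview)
Your proposal is correct and follows essentially the same approach as the paper: apply Lemma~\ref{even} with $x=\ell$, $y=\ell+2$ to get the Frobenius-type bound on $\alpha=n-2\ell$, then invoke Theorem~\ref{beta}. If anything, you are slightly more careful than the paper in explicitly checking that $\alpha$ is even and positive.
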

\begin{proof} Lemma \ref{even} implies that any positive even integer bigger than $\frac{\ell^2\ell}{2}
-\ell-\ell-2$ can be written as a linear combination of $\ell$ and $\ell+2$ with nonnegative coefficients because
$\mathrm{gcd}(\ell,\ell+2)=2$.  The result then follows from Theorem \ref{beta} by letting $\alpha$
be the preceding value and simplifying the expression.      \end{proof}
 
\section{Odd Number Of Columns}

This sections deals with an arbitrary odd number of columns.  We first look at cases for which the exact
answer is known and then look at asymptotic results.   The value $\ell=1$ was not considered earlier as
$\mathrm{HTG}(1,n,1)$ is not a graph.  We do have a graph for all odd $m>1$.

\begin{thm}\label{L1} The honeycomb toroidal graph $\mathrm{HTG}(m,n,1)$ is 2-spanning cyclable
if and only if $m\geq 3$ is odd and $n>4$.
\end{thm}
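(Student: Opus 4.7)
Proof plan. The theorem has two directions, to which I would apply rather different techniques.

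Necessity. The value $m=1$ is excluded because $\mathrm{HTG}(1,n,1)$ fails to be a simple graph: when $\ell=1$ the jump edge $[u_{0,j},u_{0,j+1}]$ (for odd $j$) coincides with the vertical edge already joining those two vertices. For $m\geq 3$ odd and $n=4$, I would show that no 2-factor with two cycles separates $u_{0,0}$ from its jump-edge neighbor $u_{m-1,n-1}$. Because that jump edge cannot be used, the two vertical neighbors of $u_{0,0}$ must lie on the cycle $C_1$ containing $u_{0,0}$, and symmetrically the two vertical neighbors of $u_{m-1,n-1}$ must lie on $C_2$. With $n=4$ the only remaining vertex of column $0$ is $u_{0,2}$, whose three neighbors are the two verticals just used and the single jump edge to $u_{m-1,1}$; a short case analysis in the spirit of Propositions \ref{half} and \ref{ahalf} forces columns $0$ and $m-1$ to become rigid 4-cycles of the would-be 2-factor. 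The intervening columns then each contribute a further vertical 4-cycle, producing at least $m\geq 3$ components in total rather than two.

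Sufficiency. Assume $m\geq 3$ odd and $n\geq 6$ even. I would induct on $m$ with base case $m=3$. For the base case, $\mathrm{HTG}(3,n,1)$ has order $3n$, and using vertex-transitivity together with the symmetry group $G=\langle\rho,\pi\rangle$ of Section 2 only a handful of orbit representatives $v$ need be considered opposite $u_{0,0}$. For each I would exhibit an explicit 2-factor built from a short ``zig-zag'' cycle $C_1$ through $u_{0,0}$ (alternating vertical, flat and jump edges across the three columns) together with a long cycle $C_2$ absorbing the remaining vertices in a helical traversal. These constructions parallel the one in Lemma \ref{O3}, modified to accommodate the extra two columns and the jump-edge placement forced by $\ell=1$.

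For the inductive step, suppose $\mathrm{HTG}(m,n,1)$ is 2-spanning cyclable and let $\{u,v\}$ be a pair in $\mathrm{HTG}(m+2,n,1)$. By vertex-transitivity I may take $u=u_{0,0}$. The graph $\mathrm{HTG}(m+2,n,1)$ is obtained from $\mathrm{HTG}(m,n,1)$ by inserting two new columns via the vertical downward fill described in the introduction, and by cyclically shifting the position where the two columns are inserted I can arrange that $v$ lies outside the inserted columns. Applying induction yields a separating 2-factor $F$ of $\mathrm{HTG}(m,n,1)$, and the vertical fill extends $F$ to a 2-factor of $\mathrm{HTG}(m+2,n,1)$: every vertex in the new columns is absorbed into the path replacing a subdivided flat edge, so the extended 2-factor again has exactly two cycles and still separates $u_{0,0}$ from $v$.

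The main obstacle I expect is the base case $m=3$, because the construction must be uniform in $n$ across all even $n\geq 6$ rather than a finite case check; in particular the very small values $n=6,8,10$ leave little room and must be verified individually or handled by a separate short construction. A secondary technicality in the inductive step is ensuring that each of the two cycles of $F$ uses at least one flat edge between the chosen pair of consecutive columns, so that the vertical fill cleanly distributes new-column vertices to both cycles; this typically follows because separating two vertices forces each cycle to have positive ``width'' across the cylinder, but the argument must be made explicit.
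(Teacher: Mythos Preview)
Your plan follows the paper's overall structure: the necessity argument for $n=4$ is essentially identical (the paper also separates $u_{0,0}$ from $u_{m-1,3}=u_{m-1,n-1}$ and forces columns $0$ and $m-1$ to become 4-cycles, which already gives at least three components since $m>2$), and sufficiency is handled by an explicit base case at $m=3$ followed by vertical fills.

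The execution differs in one notable way.  The paper does not run a generic induction on $m$: it writes down a \emph{single} 2-factor $C_1\cup C_2$ for $m=3$, with $C_1$ the fixed 6-cycle $[u_{0,0},u_{0,1},u_{1,1},u_{1,0},u_{1,n-1},u_{0,n-1},u_{0,0}]$ lying entirely in columns $0$ and $1$, and then always performs the vertical fill between the last two columns.  Since $C_1$ never meets those columns, every new vertex is absorbed into the extended $C_2$ automatically, and the five vertices of $C_1\setminus\{u_{0,0}\}$ are handled by the action of $G$ on the extended 2-factor, exactly as at $m=3$.

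Your ``secondary technicality'' is therefore mis-aimed.  You do not need \emph{both} cycles of $F$ to use a flat edge between the fill columns; if only one does, all new vertices join that cycle and the result is still a 2-factor with two components separating $u$ and $v$.  The genuine issue is rather that your abstract inductive hypothesis gives no control over \emph{which} flat edges the separating 2-factor $F$ uses, so you cannot guarantee that \emph{any} flat edge between your chosen pair of columns lies in $F$; without one, the fill leaves the new columns uncovered.  The paper sidesteps this entirely by carrying the specific $C_1\cup C_2$ (whose flat edges you know explicitly) through every fill rather than invoking an unspecified $F$.  If you want to retain your inductive formulation you would have to strengthen the hypothesis to track usable flat edges, at which point you have essentially reconstructed the paper's direct argument.
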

\begin{proof} We claim that $\mathrm{HTG}(m,4,1)$ is not 2-spanning cyclable for all odd $m\geq 3$.
It suffices to show there is no 2-factor separating $u_{0,0}$ and $u_{m-1,3}$.  If there was such a 2-factor,
then the edge $[u_{0,0},u_{m-1,3}]$ cannot be in the 2-factor.  This implies that the 2-path
$[u_{0,1},u_{0,0},u_{0,3}]$ must lie in one cycle and $[u_{m-1,0},u_{m-1,3},u_{m-1,2}]$ must lie in
the other cycle.  Then the jump edge $[u_{m-1,1},u_{0,2}]$ cannot belong to the 2-factor as it would be
forced to be in both cycles.  But that implies column 0 is one of the cycles and column $m-1$ is one of
the cycles.  This is a contradiction as $m>2$.

Assume that $n>4$ (recall that $n$ always is even).  We shall first provide a 2-factor for $m=3$ and use
vertical fills for all other values of $m$.  Let $C_1=[u_{0,0},u_{0,1},u_{1,1},u_{1,0},u_{1,n-1},
u_{0,n-1},u_{0,0}]$.  The second cycle $C_2$ is $$[u_{2,0},u_{2,1},u_{0,2},\ldots,u_{0,n-2},u_{2,n-3},
\ldots,u_{2,2},u_{1,2},\ldots,u_{1,n-2},u_{2,n-2},u_{2,n-1},u_{2,0}].$$

The 2-factor $C_1\cup C_2$ separates $u_{0,0}$ from all vertices except the other vertices of the 6-cycle 
$C_1$.  We leave it up to the reader to make the straightforward observation that we can separate $u_{0,0}$
from these other five vertices by considering the action of the group $G$ on the 2-factor.

Using vertical fills on the flat edges $[u_{1,2},u_{2,2}]$ and $[u_{1,n-2},u_{2,n-2}]$, we have all
the vertices of the two new columns belonging to the extended $C_2$.  The result follows because we can
pick up the remaining five vertices as we did when $m=3$.   \end{proof}

\medskip

We now consider extending the results for $m=1$ to any odd $m$.  It is worth looking at Figure 1 to form an idea of how to approach the problem of extending results for one column to
an odd number of columns.  First, note that we start with a 2-factor consisting of two cycles for $\mathrm{HTG}
(1,12,3)$.  We then use a vertical upward fill to generate a 2-factor of $\mathrm{HTG}(3,12,3)$ also consisting
of two cycles.  The main feature of the transition is that if the two cycles for $\mathrm{HTG}(1,12,3)$ separate
$u_{0,0}$ and $u_{0,j}$, then the two cycles for $\mathrm{HTG}(3,12,3)$ separate $u_{0,0}$ and
$u_{i,j}$ for $i=0,1,2$.  This is a feature we need to capture in general arguments.

It is now apparent that a special case has arisen.  Namely, separating $u_{0,0}$ and either $u_{1,0}$ or
$u_{2,0}$ will not be inherited from results about $\mathrm{HTG}(1,12,3)$.  So independent arguments
will be needed for the vertices at level zero.

\begin{thm}\label{L3} The honeycomb toroidal graph $\mathrm{HTG}(m,n,3)$ is 2-spanning cyclable
if and only if 
\begin{itemize}
\item $m=1$, $n\equiv 0(\mbox{mod }4)$ and $n>6$,
\item $m\geq 5$, $m$ odd and $n=6$, and
\item $m\geq 3$, $m$ odd and $n\geq 8$.
\end{itemize}
\end{thm}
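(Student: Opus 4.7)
The plan is to reduce most of the sufficiency to the $m=1$ case via vertical upward fills, leaving only a handful of base constructions and level-zero vertex pairs to handle directly; the necessity is mostly inherited from earlier results in the paper.

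For necessity, the $m=1$ clause is Lemma \ref{O3}. For odd $m\geq 3$ and $n=4$, the isomorphism $\mathrm{HTG}(m,4,3)\cong\mathrm{HTG}(m,4,1)$ together with Theorem \ref{L1} rules out 2-spanning cyclability. The only remaining excluded case is $\mathrm{HTG}(3,6,3)$, and I would dispose of it by a forced-edge argument in the style of Propositions \ref{half} and \ref{ahalf}: pick a pair such as $u_{0,0}$ and the vertex $u_{2,3}$ reached from it by a jump edge, remove that edge, and propagate the forced 2-paths at both endpoints. Because $n=6$ is so small, the forced structure quickly exhausts the graph and the two would-be cycles are forced to collide.

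For sufficiency, after Lemma \ref{O3} the remaining ranges are $m=3$ with even $n\geq 8$, $m\geq 5$ odd with $n=6$, and $m\geq 5$ odd with $n\geq 8$. The base stage produces explicit 2-factors for $\mathrm{HTG}(3,n,3)$ for each even $n\geq 8$ and for $\mathrm{HTG}(5,6,3)$. When $n\equiv 0\pmod{4}$, these come from the 2-factors of $\mathrm{HTG}(1,n,3)$ built in Lemma \ref{O3} by applying the vertical upward fill sketched in Figure 1: the resulting 2-factor of $\mathrm{HTG}(3,n,3)$ inherits each separation of $u_{0,0}$ from $u_{0,j}$ and, since the fill keeps $u_{1,j}$ and $u_{2,j}$ on the same cycle as $u_{0,j}$, automatically separates $u_{0,0}$ from $u_{1,j}$ and $u_{2,j}$ as well. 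When $n\equiv 2\pmod{4}$, and when $(m,n)=(5,6)$, no one-column 2-factor is available, so I would construct 2-factors by hand using a short cycle $C_1$ around $u_{0,0}$ and a long complementary cycle $C_2$ through every other vertex, exactly in the spirit of the proof of Lemma \ref{O3} but adapted for the extra columns. The inductive stage then iterates the vertical upward fill: given a good 2-factor of $\mathrm{HTG}(m,n,3)$, a fill on a well-chosen pair of flat edges produces a good 2-factor of $\mathrm{HTG}(m+2,n,3)$, preserving every existing separation and placing each new vertex on the same cycle as the vertex in the same row of the previous layer.

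The main obstacle is the level-zero family of pairs, namely separating $u_{0,0}$ from the vertices $u_{i,0}$ with $1\leq i\leq m-1$. These pairs have no counterpart in the one-column graph, so they are not handled by the fill inheritance above. I would resolve them by exploiting the automorphism $\pi$ introduced earlier in the paper, which maps $(i,j)\mapsto(i+1,j+1)$ (with a jump at the last column). Because $G=\langle\rho,\pi\rangle$ acts regularly on each parity orbit, any target vertex in the same orbit as $u_{0,0}$ can be reached by applying a suitable element of $G$ to a 2-factor that already separates $u_{0,0}$ from some horizontal neighbour $u_{0,k}$. The technical burden is to verify that the 2-factors produced in the base and inductive stages cover a sufficient set of orbit representatives and, for the smallest $n$ (in particular $n=6$ when $m\geq 5$), to supply one or two extra hand-built 2-factors for any stray pair that survives.
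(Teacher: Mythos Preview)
Your outline tracks the paper's architecture (necessity via Lemma~\ref{O3}, the isomorphism $\mathrm{HTG}(m,4,3)\cong\mathrm{HTG}(m,4,1)$, and a forcing argument for $(3,6,3)$; sufficiency by a base construction at small $m$ plus vertical fills), and the necessity side is essentially identical---the paper chooses the pair $u_{0,0},u_{0,5}$ for $(3,6,3)$ rather than your $u_{0,0},u_{2,3}$, but the forcing is the same.

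The sufficiency side diverges in two places. First, for $m=3$ and $n\geq 8$ the paper does \emph{not} split on $n\bmod 4$ or inherit anything from $m=1$. It builds directly, for every even $n\geq 8$, the fixed $12$-cycle
\[
C_3=[u_{0,0},u_{0,1},u_{0,2},u_{0,3},u_{1,3},u_{1,2},u_{1,1},u_{1,0},u_{2,0},u_{2,n-1},u_{2,n-2},u_{2,n-3},u_{0,0}]
\]
and an explicit complementary cycle $C_4$ assembled from repeated blocks $P(x)$ of length~$6$; it then checks that $F=C_3\cup C_4$, $\rho^{-1}(F)$, $(\rho\pi)^{-1}(F)$ and $\pi^{-2}(F)$ together separate $u_{0,0}$ from every other vertex. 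Your plan defers the entire $n\equiv 2\pmod 4$ subcase to an unspecified hand construction ``in the spirit of Lemma~\ref{O3}''; but Lemma~\ref{O3} \emph{fails} for exactly these $n$, so nothing there can be adapted, and this is the part of the proof that genuinely needs a new idea. Supplying that construction is the main content of the paper's argument.

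Second, in the inductive step the paper does not rely on upward fills alone. It forms \emph{two} $2$-factors for $\mathrm{HTG}(m+2,n,3)$, one by upward and one by downward fill between the last two columns, and even then two level-zero vertices in the new columns remain unseparated and are picked up by applying $\rho^{-1}$ to the downward-fill $2$-factor. Your claim that a single upward fill ``places each new vertex on the same cycle as the vertex in the same row of the previous layer'' is not correct in general: the cycle membership of a new vertex at row~$r$ is governed by the nearest flat edge of the old $2$-factor (above or below, depending on fill direction), not by the old vertex at row~$r$. Your $G$-orbit argument for level-zero pairs is also too loose---applying $g\in G$ to a $2$-factor separating $u_{0,0}$ and $u_{0,k}$ gives separation of $g(u_{0,0})$ and $g(u_{0,k})$, and you have to exhibit specific pairs and specific $g$ that actually hit $(u_{0,0},u_{i,0})$; the paper does this concretely.
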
 

\begin{proof}  First note that $n=4$ is not included for the simple reason that $\mathrm{HTG}(m,4,3)$
is isomorphic to $\mathrm{HTG}(m,4,1)$ and Theorem \ref{L1} implies that none of them are
2-spanning cyclable.
Lemma \ref{O3} covers the first item and $\mathrm{HTG}(1,6,3)$ in the second item. 
It is straightforward, though tedious, to show there is no 2-factor separating $u_{0,0}$ and $u_{0,5}$ in 
$\mathrm{HTG}(3,6,3)$.  We leave the verification to the reader.

\begin{picture}(200,300)(-80,-50)
\multiput(0,0)(0,20){12}{\circle*{5}}
\put(0,20){\line(0,1){40}}
\multiput(0,20)(0,40){3}{\qbezier(0,0)(-15,30)(0,60)}
\put(0,80){\line(0,1){20}}
\put(0,120){\line(0,1){40}}
\put(0,180){\line(0,1){40}}
\qbezier(0,0)(20,90)(0,180)
\qbezier(0,0)(40,110)(0,220)
\multiput(100,0)(20,0){3}{\multiput(0,0)(0,20){12}{\circle*{5}}}
\multiput(100,20)(0,40){3}{\line(1,0){20}}
\put(100,180){\line(1,0){20}}
\multiput(100,80)(0,40){3}{\line(2,-3){40}}
\multiput(100,20)(0,100){2}{\line(0,1){40}}
\qbezier(100,0)(120,90)(140,180)
\put(100,180){\line(0,1){40}}
\qbezier(100,0)(75,110)(100,220)
\multiput(120,20)(0,40){2}{\line(0,1){20}}
\multiput(120,40)(0,40){2}{\line(1,0){20}}
\put(100,80){\line(0,1){20}}
\multiput(140,20)(0,40){2}{\line(0,1){20}}
\multiput(120,100)(20,0){2}{\line(0,1){60}}
\put(120,160){\line(1,0){20}}
\multiput(120,180)(20,0){2}{\line(0,1){50}}
\multiput(120,0)(20,0){2}{\line(0,-1){15}}
\put(120,0){\line(1,0){20}}

\put(-50,-35){{\sc Figure 1.} Transition from $\mathrm{HTG}(1,12,3)$ to $\mathrm{HTG}(3,12,3)$}
\end{picture}

So we continue with the second item considering $m\geq 5$.  Let $C_1$ be the cycle \begin{eqnarray*}
[u_{0,0},u_{0,1},u_{0,2},u_{4,5},u_{4,0},u_{4,1},u_{4,2},u_{3,2},u_{3,3},u_{2,3},\\u_{2,2},
u_{2,1},u_{3,1},u_{3,0},u_{3,5},u_{3,4},u_{4,4},u_{4,2},u_{0,0}]
\end{eqnarray*}
in $\mathrm{HTG}(5,6,3)$.  The complementary cycle is $$C_2=[u_{1,0},u_{1,1},u_{1,2},u_{1,3},u_{0,3},
u_{0,4},u_{0,5},u_{1,5},u_{1,4},u_{2,4},u_{2,5},u_{2,0},u_{1,0}].$$

The facts that every vertex of the 1-column lies in $C_2$ and the columns form blocks for the group $G$
imply that there are 2-factors separating $u_{0,0}$ and every vertex of the form $u_{i,j}$ for $i\neq 0$ and
$0\leq j\leq 5$.   Also, $u_{0,0}$ is separated from $u_{0,3},u_{0,4}\mbox{ and }u_{0,5}$ in the 2-factor
formed by $C_1\cup C_2$.  Finally, $\rho^{-1}$ acting on the latter 2-factor yields a 2-factor separating
$u_{0,0}$ from $u_{0,1}\mbox{ and }u_{0,2}$ completing the proof for $\mathrm{HTG}(5,6,3)$.

Starting with $C_1$ and $C_2$, we may use repeated downward or upward fills between the last two
columns to obtain 2-factors for any odd value of $m>5$ that easily lead to the conclusion that $\mathrm{HTG}(m,6,3)$
is 2-spanning cyclable.  This completes the proof of the second item.

To prove the last item, we start with $m=3$.  Let $C_3$ be the 12-cycle \[[u_{0,0},u_{0,1},u_{0,2},
u_{0,3},u_{1,3},u_{1,2},u_{1,1},u_{1,0},u_{2,0},u_{2,n-1},u_{2,n-2},u_{2,n-3},u_{0,0}].\]
The next step is to show that for all even $n\geq 8$ there is a complementary cycle yielding a 2-factor.

Let $P(x)$, $x$ odd, denote the path $$[u_{2,x},u_{0,x+3},u_{0,x+4},u_{1,x+4},u_{1,x+3},u_{2,x+3},
u_{2,x+4}].$$  Now $n=8$ is a special case.  Obtain the complementary cycle by starting with $P(1)$ from
which the last edge $[u_{2,4},u_{2,5}]$ is deleted and is replaced by the path from $u_{2,4}$ down to
$u_{2,1}$.  Then use vertical extension on the edge $[u_{0,5},u_{1,5}]$ to reach $[u_{0,7},u_{1,7}]$
and we now have a complementary 12-cycle.

For $n\geq 10$, use $P(x)$ for odd $x$ from $x=1$ through $x=n-9$.  At this point those paths form
a single path whose terminal vertices are $u_{2,n-5}$ and $u_{2,n-7}$ and use the remaining vertices on
levels 1 through level $n-5$.  Now add the edge $[u_{2,n-7},u_{0,n-4}]$ and the 2-path $[u_{2,n-5},
u_{2,n-4},u_{1,n-4}]$.  Complete the path to a cycle by performing the obvious vertical extension to
level $n-1$.  Call this cycle $C_4$.  (See Figure 2 for the case $n=12$.)

Clearly $F=C_3\cup C_4$ separates $u_{0,0}$ from every vertex on $C_4$.  If we let $\rho^{-1}$ act on
the 2-factor $F$, then $u_{0,0}$ is separated from $u_{0,2},u_{0,3},u_{1,2},u_{1,3},u_{2,0}$ and
$u_{2,n-1}$ in $\rho^{-1}(F)$.  The 2-factor $(\rho\pi)^{-1}(F)$ separates $u_{0,0}$ from
$u_{0,1},u_{1,0},u_{1,1}$ and $u_{2,n-2}$.  Finally, $\pi^{-2}(F)$ separates $u_{0,0}$ and
$u_{2,n-3}$.  Thus, $\mathrm{HTG}(3,n,3)$ is
2-spanning cyclable for all $n\geq 8$.

Now we consider extending the results to all odd $m>3$.  If we use the flat edges between the last
two columns and form one 2-factor with upward vertical fills and a second 2-factor with downward
vertical fills in $F$, then between these two 2-factors we separate $u_{0,0}$ from all the vertices in the two
new columns except $u_{2,0}$ and $u_{3,0}$.  These two vertices are handled by using $\rho^{-1}$
on the downward vertical fill.  The vertices from the original three columns are covered by vertical fills
on the various 2-factors employed to separate them from $u_{0,0}$.  This easily carries over to all
odd $m$ as we iterate two columns at a time.  This completes the proof.      \end{proof}

\begin{thm}\label{oddgen} Let $\ell>3$ be odd.  The graph $\mathrm{HTG}(m,n,\ell)$ is 2-spanning
cyclable for all odd $m$ and all even $n$ satisfying $n>\frac{3\ell^2-4\ell-3}{2}$.
\end{thm}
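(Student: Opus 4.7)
The plan is to mimic the extension argument used in the proof of Theorem \ref{L3}: start with the 2-factors produced by Corollary \ref{asymp} for $m=1$ under the hypothesis $n > \frac{3\ell^2-4\ell-3}{2}$, and then inflate $m$ by two at each step using the vertical upward and downward fills defined in Section~1. Because $\ell$ is odd, the construction that converts $\mathrm{HTG}(1,n,\ell)$ into $\mathrm{HTG}(3,n,\ell)$ by replacing a jump edge $[v_j,v_{j+\ell}]$ with $[u_{2,j},u_{0,j+\ell}]$ and adding a flat edge $[u_{0,j},u_{1,j}]$ (as illustrated in Figure 1) is the natural base step; repeated vertical fills on flat edges between the last two columns then iterate the argument for all odd $m$.

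The key point, already exploited in Theorem \ref{L3}, is that the two added columns form a single block under the subgroup $G$, and a vertical fill has the property that every vertex in the two new columns at level $j$ ends up on the same cycle as the vertex at level $j$ of the flat edge used to start the fill. Consequently, if the two-cycle 2-factor produced by Theorem \ref{alpha} separates $v_0$ and $v_j$ in $\mathrm{HTG}(1,n,\ell)$, then the extended 2-factor in $\mathrm{HTG}(m,n,\ell)$ separates $u_{0,0}$ from $u_{i,j}$ for \emph{every} $i$, provided the fill is chosen so that the flat edge used sits on the appropriate cycle. I would carry out both an upward fill and a downward fill on the same base 2-factor in order to obtain two separating 2-factors that between them cover every vertex $u_{i,j}$ with $j\neq 0$ in the two new columns.

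The hard part, exactly as in Theorem \ref{L3}, is handling the vertices in the newly added columns that sit at level $0$ (or at whichever level the flat edge used for the fill is located), since those are not inherited from the $m=1$ 2-factors. For these, I would appeal to the automorphism group $G=\langle \rho,\pi\rangle$: applying a suitable power of $\rho$ to one of the previously constructed 2-factors shifts the level-$0$ vertices into positions already covered, and $\pi$ or $\rho\pi$ can be used to move between the two $G$-orbits when parities require it. This is the same trick used at the end of the proof of Theorem \ref{L3} to separate $u_{0,0}$ from $u_{2,0}$ and $u_{3,0}$, and it carries over without change because the hypothesis on $n$ guarantees that the base 2-factor for $m=1$ exists for every shift we need.

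The main obstacle is checking that the level-$0$ separation works uniformly: the 2-factor of Theorem \ref{alpha} is somewhat rigid near the cycle $C_1=[v_{\ell-3},\ldots,v_{n-3},v_{\ell-3}]$, so I expect the verification that applying $\rho^k$ and $\pi^k$ to the extended 2-factor really does hit the small list of exceptional vertices in the two new columns to be the most delicate point. Once that is handled for the step $m\mapsto m+2$, the induction runs identically for every odd $m\geq 1$ and yields 2-spanning cyclability for all odd $m$ whenever $n > \frac{3\ell^2-4\ell-3}{2}$.
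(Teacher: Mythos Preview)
Your proposal follows essentially the same route as the paper: reduce to $m=1$ via Corollary~\ref{asymp}, pass from $m$ to $m+2$ by the vertical upward and downward fills on the 2-factor $C_1\cup C_2$ of Theorem~\ref{alpha}, and use the abelian group $G=\langle\rho,\pi\rangle$ to mop up the exceptional vertices.  The paper carries this out by writing down explicitly how the 6-cycle $C_1$ grows under each fill (yielding 2-factors $F_3$ and $F_4$), then applies three fixed elements $g_1,g_2,g_3\in G$ (sending $u_{0,\ell-3},u_{0,\ell-1},u_{0,n-2}$ to $u_{0,0}$) to $F_3$ and $F_4$ and checks that these six images together separate $u_{0,0}$ from every vertex in $\mathrm{HTG}(3,n,\ell)$.

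One point to tighten: your statement that the two fills alone ``cover every vertex $u_{i,j}$ with $j\neq 0$ in the two new columns'' is not what the paper establishes, and as stated it is not quite right.  After a fill, the vertices of the new columns at level $j$ land on the cycle of the \emph{nearest used jump edge} in the fill direction, which need not be the cycle containing $u_{0,j}$.  The paper sidesteps this by never asserting level-by-level inheritance; instead it computes the exact set of column-$1$ and column-$2$ vertices absorbed into the extended $C_1$ under each fill and then shifts by $g_1,g_2,g_3$ so that the ranges of $j$ covered by $g_1(F_4)$ and $g_3(F_4)$ overlap (using $n>2\ell$), with $g_2(F_3)$ picking up the four stragglers at levels $n-2,n-1$.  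Your level-0 worry is thus subsumed in that same calculation rather than being a separate case.  Once you carry out this explicit bookkeeping your plan goes through exactly as the paper's does.
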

\begin{proof} Corollary \ref{asymp} corresponds to $m=1$.  To handle $m=3$ and larger values of odd
$m$, we need to carefully consider vertical fills.  In some sense, moving from $m=1$ to $m=3$ is the
crucial step.

Let $C_1$ and $C_2$ be the cycles in the proof of Theorem \ref{alpha} and recall that
$C_1=[v_{\ell-3},v_{\ell-2},v_{\ell-1},v_{n-1},v_{n-2},v_{n-3},v_{\ell-3}]$.  For the vertical fills for $m=3$, 
$C_1$ produces  the jump edges $[u_{2,n-3},u_{0,\ell-3}]$ and $[u_{2,n-1},u_{0,\ell-1}]$, and the flat 
edges $[u_{0,n-3},u_{1,n-3}]$ and $[u_{0,n-1},u_{1,n-1}]$.

\begin{picture}(200,300)(-50,-50)
\multiput(100,0)(20,0){3}{\multiput(0,0)(0,20){12}{\circle*{5}}}
\put(100,160){\line(0,1){20}}
\multiput(100,80)(0,40){3}{\line(2,-3){40}}
\put(100,0){\line(0,1){60}}
\qbezier(100,0)(120,90)(140,180)
\put(100,180){\line(0,1){40}}
\put(100,60){\line(1,0){20}}
\put(120,0){\line(0,1){60}}
\put(140,20){\line(0,1){40}}
\multiput(120,80)(0,40){2}{\line(1,0){20}}
\put(140,80){\line(0,1){20}}
\multiput(100,100)(0,40){2}{\line(1,0){20}}
\put(100,220){\line(1,0){20}}
\put(140,120){\line(0,1){40}}
\put(120,160){\line(1,0){20}}
\put(120,200){\line(0,1){20}}
\put(120,120){\line(0,1){20}}
\put(120,0){\line(1,0){20}}
\put(140,180){\line(0,1){40}}
\multiput(100,80)(0,40){2}{\line(0,1){20}}
\put(120,80){\line(0,1){20}}
\put(120,160){\line(0,1){40}}
\qbezier(140,0)(160,110)(140,220)
\put(40,-35){{\sc Figure 2.} A 2-factor in $\mathrm{HTG}(3,12,3)$}
\end{picture}

We want to know what happens to $C_1$ after vertical fills.  If we do a downward fill, 
$[u_{1,n-1},u_{1,n-2},u_{2,n-2},u_{2,n-1}]$ is the path from $u_{1,n-1}$ to $u_{2,n-1}$.  As we move
down from $j=n-3$, the first flat edge from $C_2$ is $[u_{0,n-\ell},u_{1,n-\ell}]$ which implies the
path from $u_{1,n-3}$ to $u_{2,n-3}$ starts down at $u_{1,n-3}$ until reaching $u_{1,n-\ell+1}$
followed by the edge to $u_{2,n-\ell+1}$ and then back up to $u_{2,n-3}$.  Let $C_3$ denote the
cycle obtained from $C_1$ by a downward fill and $F_3$ denote the resulting 2-factor. 

When doing an upward fill, the first flat edge from $C_2$ we encounter as we move up from $n-1$ is 
$[u_{0,\ell-4},u_{1,\ell-4}]$.  Thus, we know the replacement paths for an upward fill.  Let $C_4$ denote 
the cycle obtained from $C_1$ via an upward fill and $F_4$ the resulting 2-factor.

We use three automorphisms from the group $G$ acting on $\mathrm{HTG}(3,n,\ell)$.  Let $g_1\in G$
map $u_{0,\ell-3}$ to $u_{0,0}$, $g_2$ map $u_{0,\ell-1}$ to $u_{0,0}$ and $g_3$ map $u_{0,n-2}$
to $u_{0,0}$.

We first show that we may separate $u_{0,0}$ from all the other vertices from column 0.  The 2-factor
$g_1(F_3)$ separates $u_{0,0}$ from all vertices $u_{0,j}$ except for $j\in\{1,2,n-\ell,n-\ell+1,
n-\ell+2\}$.  The 2-factor $g_2(F_3)$ separates $u_{0,0}$ from $u_{0,1},u_{0,2},u_{0,n-\ell+1}
\mbox{ and }u_{0,n-\ell+2}$.  Finally, the only way that $g_3(F_3)$ does not separate $u_{0,0}$ and
$u_{0,n-\ell}$ is if $n=2\ell$, but this is not the case by hypothesis. 

We have found 2-factors that separate $u_{0,0}$ from all other vertices in column 0 in $\mathrm{HTG}(3,n,\ell)$.
We move to columns 1 and 2.  The 2-factor $g_1(F_4)$ separates $u_{0,0}$ from $u_{i,j}$ for $i=1,2$ and
$0\leq j\leq n-\ell-1$.  The 2-factor $g_3(F_4)$ separates $u_{0,0}$ and $u_{i,j}$ for $i=1,2$ and $j$ in
the range $\ell-1\leq j\leq n-2$.  Because $\ell-1<n-2\ell+3$, the two intervals overlap and the only vertices
not separated from $u_{0,0}$ are those for which $j=n-2\mbox{ and }n-1$.  Fortunately, $g_2(F_3)$
separates $u_{0,0}$ from those four missing vertices.

The preceding shows that $\mathrm{HTG}(3,n,\ell)$ is 2-spanning cyclable for $n$ satisfying the hypotheses.
We obtain the desired separability for all remaining odd values of $m$ by iteratively performing vertical
fills between the last two columns.  Note that the fills alternate between upward and downward as we
carry out the process.     \end{proof}

\section{Even Number Of Columns}

Again we examine cases for which there are exact results and cases for which there are asymptotic
results.  The first exact case is for $\ell=0$.

\begin{thm}\label{L0} The graph $\mathrm{HTG}(m,n,0)$ is 2-spanning cyclable if and only if
$n>4$ or $(m,n,0)=(2,4,0)$.
\end{thm}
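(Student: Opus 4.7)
The plan is to treat three parameter regimes separately. When $m=2$, Lemma~\ref{zero} already establishes 2-spanning cyclability for every even $n\geq 4$, so this single citation settles both the exceptional point $(m,n,0)=(2,4,0)$ and the entirety of $m=2$ with $n\geq 6$. The remaining work splits into the negative direction for $n=4$ with even $m\geq 4$ and the positive direction for $n\geq 6$ with even $m\geq 4$.

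For the ``only if'' direction when $n=4$ and $m\geq 4$ is even, I would show that no 2-factor separates $u_{0,0}$ and $u_{m-1,0}$. These vertices are joined by the jump edge $[u_{m-1,0},u_{0,0}]$, which is therefore forbidden, so the cycle $C_1$ containing $u_{0,0}$ must use both its vertical edges, placing $u_{0,1}$ and $u_{0,3}$ in $C_1$, and symmetrically $u_{m-1,1}$ and $u_{m-1,3}$ lie in $C_2$. I would then examine $u_{0,2}$, whose three neighbors are $u_{0,1},u_{0,3}$ (both in $C_1$) and $u_{m-1,2}$ along the second jump edge. If $u_{0,2}\in C_2$, only its edge to $u_{m-1,2}$ is available in $C_2$, so $u_{0,2}$ cannot attain valency two. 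If $u_{0,2}\in C_1$ and uses both its vertical edges, then $C_1$ is exactly column $0$; in the residual graph the vertices $u_{1,1}$ and $u_{1,3}$ each have only vertical neighbors available, which forces $u_{1,0}$ and $u_{1,2}$ to use both their vertical edges and pinches column $1$ off as an isolated $4$-cycle inside $C_2$, so $C_2$ cannot be a single cycle (this is where $m\geq 4$ enters). The remaining possibility, $u_{0,2}\in C_1$ using one vertical edge together with the jump to $u_{m-1,2}$, places $u_{m-1,2}$ in $C_1$; but its two vertical neighbors lie in $C_2$ and its only other edge was just used, again giving valency one. Each case is contradictory.

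For $n\geq 6$ and even $m\geq 4$, I would start from Lemma~\ref{zero} applied to $\mathrm{HTG}(2,n,0)$, producing a 2-factor that separates $u_{0,0}$ from a chosen representative, and then apply iterated vertical fills between the last two working columns, using the template already in place for Theorem~\ref{L3} and Theorem~\ref{oddgen}. Each fill extends the two existing cycles to include the $2n$ newly inserted vertices without disturbing the separation property. Alternating upward and downward fills together with the action of the automorphism group $G=\langle\rho,\pi\rangle$ handles every target vertex in the new columns, including the boundary vertices in row $0$ that are not automatically captured by a single fill.

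The main obstacle is the negative argument at $n=4$. One must notice that the forcing passes through \emph{both} jump edges $[u_{m-1,0},u_{0,0}]$ and $[u_{m-1,2},u_{0,2}]$, and carefully rule out the ``$C_1$ equals column~$0$'' option by tracking what happens in column~$1$; this is the unique place where $m\geq 4$ is used, and it is precisely why the exceptional case $(2,4,0)$ survives.
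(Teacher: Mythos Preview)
Your ``only if'' argument for $n=4$, $m\geq 4$ is correct and matches the paper's approach: both hinge on the two jump edges $[u_{m-1,0},u_{0,0}]$ and $[u_{m-1,2},u_{0,2}]$ being forbidden, which forces $C_1$ and $C_2$ to be the column-0 and column-$(m-1)$ $4$-cycles. The paper states this more tersely (observing directly that $u_{0,2}$ must lie in $C_1$ and $u_{m-1,2}$ in $C_2$, so the second jump edge is unusable), while you run a three-way case split on $u_{0,2}$; the content is the same.

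For the ``if'' direction with $n\geq 6$ and $m\geq 4$, your plan diverges from the paper and has a genuine gap. You propose to invoke Lemma~\ref{zero} for $m=2$ and then iterate vertical fills. But Lemma~\ref{zero} is non-constructive: it asserts $2$-spanning cyclability without exhibiting a $2$-factor, and the obvious witness in $\mathrm{HTG}(2,n,0)$---the two column cycles---uses \emph{no} flat edges between columns $0$ and $1$, so a vertical fill applied to it leaves every vertex of the two new columns uncovered. Vertical fills act on a specific $2$-factor and require at least one flat edge of that $2$-factor to lie between the columns being filled; you therefore cannot bootstrap from the bare statement of Lemma~\ref{zero}. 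The paper avoids this by constructing an explicit $2$-factor directly at $m=4$: the $6$-cycle $C_1=[u_{0,0},u_{0,1},u_{1,1},u_{1,0},u_{1,n-1},u_{0,n-1},u_{0,0}]$ together with a complementary cycle $C_2$ containing all of columns $2$ and $3$. Because $C_2$ uses flat edges between the last two columns, subsequent vertical fills are legitimate and the action of $G$ finishes the separation check. Your plan can be repaired by writing down an explicit $2$-factor in $\mathrm{HTG}(2,n,0)$ that does use a flat edge (for instance the $4$-cycle $[u_{0,0},u_{0,1},u_{1,1},u_{1,0},u_{0,0}]$ together with its complementary $(2n-4)$-cycle), but at that point you are essentially reproducing the paper's construction one fill earlier.
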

\begin{proof} The graph $\mathrm{HTG}(2,4,0)$ is 2-spanning cyclable by Lemma \ref{zero}.
Next consider $\mathrm{HTG}(m,4,0)$ for even $m>2$.  We claim there is no 2-factor separating
$u_{0,0}$ and $u_{m-1,0}$.  If there was such a 2-factor, then the cycle $C_1$ containing $u_{0,0}$
would contain the 2-path $[u_{0,1},u_{0,0},u_{0,3}]$ and the cycle $C_2$ containing $u_{m-1,0}$
would contain the 2-path $[u_{m-1,1},u_{m-1,0},u_{m-1,3}]$.  The jump edge $[u_{0,1},u_{m-1,1}]$
cannot belong to the 2-factor because it would have one end in $C_1$ and the other end in $C_2$.
Thus, $C_1$ is forced to be the 4-cycle which is column 0 and $C_2$ is forced to be the 4-cycle which
is column $m-1$.  We conclude that the graph is not 2-spanning cyclable.

Next consider $\mathrm{HTG}(4,n,0)$ with even $n\geq 6$.  Let $C_1$ be the 6-cycle given by
$C_1=[u_{0,0},u_{0,1},u_{1,1},u_{1,0},u_{1,n-1},u_{0,n-1},u_{0,0}]$.  There is a cycle through
the remaining vertices given by 
\begin{eqnarray*}C_2&=&[u_{0,2},u_{3,2},u_{3,1},u_{3,0},u_{3,n-1},u_{2,n-1},u_{2,0},
u_{2,1},u_{2,2},u_{1,2},\ldots,u_{1,n-2},\\& & u_{2,n-2},\ldots,u_{2,3},u_{3,3},\ldots,u_{3,n-2},
u_{0,n-2},\ldots,u_{0,2}]. \end{eqnarray*}  The 2-factor $C_1\cup C_2$ separates $u_{0,0}$ from
every vertex not belonging to $C_1$.  It is straightforward to check that using the action of the group
$G$, we can find 2-factors separating $u_{0,0}$ from the five vertices of $C_1$ distinct from $u_{0,0}$.

All the vertices of the last two columns belong to $C_2$.  So if we add successive pairs of columns by
using vertical fills between the last two columns, all the new vertices do not belong to $C_1$ so
that we see easily that $\mathrm{HTG}(m,n,0)$ is 2-spanning cyclable for all even $m\geq 4$.  The
result now follows.    \end{proof}

\begin{thm}\label{L2} The graph $\mathrm{HTG}(m,n,2)$ is 2-spanning cyclable if and only if
$(m,n,2)=(2,4,2)$ or $m\geq 4$ is even and $n\geq 6$ is even.
\end{thm}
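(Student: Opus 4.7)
The theorem has a negative and a positive direction, and the plan is to dispose of the negative direction with prior results plus one small argument, then establish the positive direction using the same base-case-plus-vertical-fills template used throughout the paper.

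For the negative direction, the case $m=2$ with even $n\geq 6$ is Lemma \ref{two}. What remains is to exclude $\mathrm{HTG}(m,4,2)$ for even $m\geq 4$, and here I would mimic the argument for $\mathrm{HTG}(m,4,0)$ in Theorem \ref{L0}. Consider the pair $u_{0,0}$ and $u_{m-1,2}$: the jump edge $[u_{m-1,2},u_{0,0}]$ cannot appear in the 2-factor, so the cycle through $u_{0,0}$ must contain the 2-path $[u_{0,1},u_{0,0},u_{0,3}]$ and the cycle through $u_{m-1,2}$ must contain the 2-path $[u_{m-1,1},u_{m-1,2},u_{m-1,3}]$. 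Then the remaining jump edge $[u_{m-1,0},u_{0,2}]$ is forced to lie in neither cycle, which collapses the two cycles to column $0$ and column $m-1$, contradicting $m\geq 4$.

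For the positive direction, $(m,n,\ell)=(2,4,2)$ is Lemma \ref{two}. The main work is the base case $m=4$, even $n\geq 6$. By vertex-transitivity it suffices to separate $u_{0,0}$ from an arbitrary distinct vertex. I would build a 2-factor consisting of a short cycle $C_1$ through $u_{0,0}$ sitting in columns $0$ and $1$, analogous to the one used for $\mathrm{HTG}(4,n,0)$ in Theorem \ref{L0}, for example the 6-cycle $[u_{0,0},u_{0,1},u_{1,1},u_{1,0},u_{1,n-1},u_{0,n-1},u_{0,0}]$, together with a companion cycle $C_2$ that threads through the remaining vertices of the four columns by alternating long vertical runs in rows $2,3,\ldots,n-2$ with the available flat edges and the two jump edges of $\mathrm{HTG}(4,n,2)$. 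The 2-factor $C_1\cup C_2$ separates $u_{0,0}$ from every vertex not on $C_1$, and the five other vertices of $C_1$ are handled by applying suitable elements of the group $G=\langle\rho,\pi\rangle$ to the 2-factor, exactly as in the proofs of Theorems \ref{L0}, \ref{L1} and \ref{L3}.

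Once the $m=4$ case is in hand, the extension to all even $m\geq 6$ proceeds by iterated vertical fills between the last two columns: every fill absorbs the vertices of two additional columns into the enlarged $C_2$, and the combination of upward versus downward fills together with the group action produces 2-factors separating $u_{0,0}$ from every vertex of the two new columns. The main obstacle I expect is the construction of $C_2$ on $\mathrm{HTG}(4,n,2)$ in a way that works uniformly for every even $n\geq 6$, especially at $n=6$ where there is almost no room to route $C_2$; a secondary subtlety is checking that $\rho$ and $\pi$ between them furnish separating 2-factors for all five residual vertices of $C_1$, since the group has two orbits on $V(\mathrm{HTG}(4,n,2))$ and care is needed to hit vertices of both parities.
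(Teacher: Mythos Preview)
Your outline is sound and would yield a correct proof, but it diverges from the paper in two places worth noting.

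For the $n=4$ exclusion, the paper does not repeat the forcing argument from Theorem~\ref{L0}; it simply observes that swapping $u_{m-1,0}$ and $u_{m-1,2}$ exhibits an isomorphism $\mathrm{HTG}(m,4,2)\cong\mathrm{HTG}(m,4,0)$ and then invokes Theorem~\ref{L0} directly. Your direct argument on the pair $u_{0,0},u_{m-1,2}$ is correct and essentially reproduces the $\ell=0$ proof under that isomorphism, so nothing is lost except brevity.

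For the $m=4$ base case the paper makes the opposite choice to yours: instead of a short $C_1$ in columns $0,1$ with a complicated $C_2$, it takes the \emph{long} serpentine cycle
\[
C_1=[u_{0,0},u_{0,1},u_{1,1},\ldots,u_{1,n-2},u_{2,n-2},\ldots,u_{2,1},u_{3,1},\ldots,u_{3,n-2},u_{0,0}],
\]
which uses the jump edge $[u_{3,n-2},u_{0,0}]$ and sweeps through all four columns. The payoff is that the complementary $C_2$ is then a very simple zigzag along levels $0$ and $n-1$ together with the rest of column~$0$, so no delicate routing is needed and the ``does $C_2$ exist at $n=6$?'' worry you flag never arises. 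The separation of the five residual vertices on $C_1$ is then handled with $\rho^{-1}$, $\rho^{-2}$, and the element of $G$ sending $u_{2,0}$ to $u_{0,0}$. Your 6-cycle approach also works (one can check directly that a complementary cycle exists even at $n=6$) and has the pleasant feature that $C_1$ stays inside columns $0,1$, so vertical fills between the last two columns enlarge only $C_2$; the paper accepts a $C_1$ that meets the fill columns and just notes that ``essentially the same verification'' goes through after alternating upward and downward fills.
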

\begin{proof}  By interchanging $u_{m-1,0}$ and $u_{m-1,2}$, it is clear that $\mathrm{HTG}(m,4,2)$
and $\mathrm{HTG}(m,4,0)$ are isomorphic.  Theorem \ref{L0} then takes care of the claims for
$n=4$.  Lemma \ref{two} eliminates the remaining possibilities for $m=2$.  Hence, we consider
$m\geq 4$ and $n\geq 6$.  

Let $$C_1=[u_{0,0},u_{0,1},u_{1,1},\ldots u_{1,n-2},u_{2,n-2},
\ldots,u_{2,1},u_{3,1},\ldots,u_{3,n-2},u_{0,0}].$$  We leave it to the reader to easily verify that
there is a cycle $C_2$ through the complementary set of vertices.  Note that all the vertices on level $n-1$, 
all the vertices on level 0 distinct from $u_{0,0}$ and all the vertices of column 0, other than $u_{0,1}$,
belong to $C_2$ and are separated from $u_{0,0}$.  

Letting $\rho^{-1}$ act on $C_1\cup C_2$ yields a 2-factor in which $u_{0,0}$ is separated from all
vertices of the form $u_{i,j}$ for $i=1,2,3$ and $j\in\{1,2,\ldots,n-4\}$.  Using the 2-factor arising from
$\rho^{-2}$, we take care of the remaining vertices on levels $n-2$ and $n-3$.  Finally, the element of
$G$ mapping $u_{2,0}$ to $u_{0,0}$ gives a 2-factor separating $u_{0,0}$ and $u_{0,1}$.  It
follows that $\mathrm{HTG}(4,n,2)$ is spanning 2-cyclable.

If we start with an upward vertical fill between the last two columns, essentially the same verification
shows that the resulting $\mathrm{HTG}(6,n,2)$ is spanning 2-cyclable.  We continue with alternating
upward and downward vertical fills to complete the proof.    \end{proof}

\begin{thm}\label{evengen} Let $\ell\geq 4$ be even.  If $m$ is even and $n>\frac{\ell^2+2\ell-4}{2}$,
then $\mathrm{HTG}(m,n,\ell)$ is 2-spanning cyclable.
\end{thm}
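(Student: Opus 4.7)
The plan is to reduce to the base case $m=2$ handled by Corollary \ref{betamax} and then extend by iterated vertical fills, exactly as Theorem \ref{oddgen} extended the $m=1$ case to all odd $m$. Note that the bound $n>\frac{\ell^2+2\ell-4}{2}$ is precisely the one from Corollary \ref{betamax}, so as long as vertical fills preserve the relevant separation properties, no asymptotic loss occurs when lifting to larger even $m$. The critical step is therefore the transition from $m=2$ to $m=4$; everything beyond that proceeds by the same alternating upward/downward fills between the last two columns used in Theorem \ref{oddgen}.

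First I would take the pair of cycles $C_1,C_2$ constructed in the proof of Theorem \ref{beta} (with $\alpha=\beta(\ell+2)+\gamma\ell$ chosen to make $n=2\ell+\alpha$ representable, guaranteed by Lemma \ref{even} under the hypothesis on $n$). Recall $C_1$ is the $(2\ell-2)$-cycle $[u_{0,0},u_{0,n-1},u_{1,n-1},u_{1,0},u_{1,1},\ldots,u_{1,\ell-3},u_{0,\ell-3},\ldots,u_{0,0}]$; it uses exactly the two flat edges $[u_{0,n-1},u_{1,n-1}]$ and $[u_{0,\ell-3},u_{1,\ell-3}]$, while $C_2$ uses the remaining flat edges between columns $0$ and $1$. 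Inserting two new columns between columns $0$ and $1$ with an upward vertical fill yields a 2-factor $F^{\uparrow}$ of $\mathrm{HTG}(4,n,\ell)$ consisting of an enlarged $C_1^{\uparrow}$ and an enlarged $C_2^{\uparrow}$; a downward fill yields $F^{\downarrow}=C_1^{\downarrow}\cup C_2^{\downarrow}$. By construction every vertex of the two new columns lies either on $C_2^{\uparrow}$ or on $C_2^{\downarrow}$, so each of $F^{\uparrow}$, $F^{\downarrow}$ separates $u_{0,0}$ from the corresponding set of new-column vertices.

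Next I would invoke the group $G=\langle\rho,\pi\rangle$. As in Theorem \ref{oddgen}, the pairs $(u_{0,0},u_{0,j})$ in the original columns 0 and 3 (after relabeling) inherit their separation directly from the $m=2$ analysis: for each such vertex $u_{0,j}$ already separated from $u_{0,0}$ in $\mathrm{HTG}(2,n,\ell)$, apply the vertical fills to that 2-factor itself. For the two newly inserted columns (the ones indexed $1$ and $2$), combine the coverage provided by $F^{\uparrow}$, $F^{\downarrow}$ and their images under a small number of group elements $g\in G$ chosen to translate $u_{0,0}$ onto the few vertices of $C_1^{\uparrow}$, $C_1^{\downarrow}$ that would otherwise be missed. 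The regularity of the $G$-action on each orbit guarantees that for each such target vertex a suitable $g$ exists.

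The main obstacle will be the bookkeeping: one must verify that the level intervals covered by $F^{\uparrow}$, $F^{\downarrow}$, $g(F^{\uparrow})$, $g(F^{\downarrow})$ together exhaust all values of $j\in\{0,1,\ldots,n-1\}$ and all column indices $i\in\{1,2\}$. This is where the hypothesis $n>\frac{\ell^2+2\ell-4}{2}$ is used: it ensures, via Corollary \ref{betamax} and its proof, that consecutive intervals of covered levels overlap rather than fall short, avoiding the degenerate case where two short intervals leave an uncovered gap. Once this base transition from $m=2$ to $m=4$ is checked, extending to all even $m\geq 6$ is routine: apply vertical fills between the last two columns repeatedly (alternating upward and downward as in Theorem \ref{oddgen}), noting that each fill only enlarges $C_2$ and keeps the newly added vertices on the cycle disjoint from $u_{0,0}$, so separation is preserved at every step.
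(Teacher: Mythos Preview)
Your plan is the same as the paper's: start from Corollary \ref{betamax}, lift $C_1\cup C_2$ from Theorem \ref{beta} to $m=4$ via upward and downward vertical fills, use the $G$-action for the residual pairs, and then iterate two columns at a time. So the strategy is fine; the issues are in the details you gloss over.

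First, your final paragraph asserts that for $m\geq 4$ ``each fill only enlarges $C_2$ and keeps the newly added vertices on the cycle disjoint from $u_{0,0}$''. That is false. The cycle $C_1$ contains the flat edges $[u_{0,n-1},u_{1,n-1}]$ and $[u_{0,\ell-3},u_{1,\ell-3}]$, so every vertical fill between adjacent columns through which $C_1$ passes subdivides those edges too and places a block of new-column vertices onto the enlarged $C_1$. You already acknowledged this earlier (when you referred to the vertices of $C_1^{\uparrow}$, $C_1^{\downarrow}$ ``that would otherwise be missed''), so the iteration is not automatic: at each step you still need both an upward and a downward fill plus a few $G$-translates, exactly as in the first step. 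The paper handles this by alternating fills and re-invoking the $G$-action at each stage.

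Second, the paper's proof isolates the vertices on level $0$ as requiring a separate argument: the images of $F^{\uparrow}$ and $F^{\downarrow}$ under the obvious powers of $\rho$ cover levels $1$ through $n-1$, but separating $u_{0,0}$ from $u_{i,0}$ for $1\leq i\leq m-1$ needs the observation that in $C_2$ the jump edge lands at $u_{0,n-2}$ followed by the subpath $[u_{0,n-2},u_{0,n-3},u_{0,n-4}]$, so a suitable sequence of alternating fills keeps $u_{0,n-2}$ on $C_2$ while $u_{i,n-2}$ lies on $C_1$ for $1\leq i\leq m-2$. Your ``bookkeeping'' paragraph does not supply this, and your stated reason for invoking the bound $n>\tfrac{\ell^2+2\ell-4}{2}$ (overlap of level intervals) is not where it is actually used---the bound enters only through Lemma \ref{even} to guarantee the representation $\alpha=\beta(\ell+2)+\gamma\ell$.
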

\begin{proof} The case $m=2$ is covered by Corollary \ref{betamax}.  Consider $m=4$.  Let $C_1$ and
$C_2$ be the cycles forming the 2-factor in the proof of Theorem \ref{beta} and recall that
$$C_1=[u_{0,0},u_{0,n-1},u_{1,n-1},u_{1,0},u_{1,1},\ldots,u_{1,\ell-3},u_{0,\ell-3},u_{0,\ell-4},\ldots,u_{0,0}].$$ 
We use $C_1$ as the basis for extending the desired property to all even $m>2$

It is straightforward to see that if we form a 2-factor for $m=4$ using upward vertical fills and another 2-factor
for $m=4$ using downward vertical fills on the flat edges between the two columns, between the two 2-factors 
we separate $u_{0,0}$ from all vertices of the form $u_{i,j}$ for $0\leq i\leq 3$ and $j\in\{\ell-2,\ell-1,\ldots,
n-2\}$.  We may continue using the appropriate vertical fills to separate $u_{0,0}$ from all vertices on levels
$\ell-2$ through $n-2$ for all even $m$.

By taking the element of $G$ mapping $u_{0,\ell-4}$, we separate $u_{0,0}$ from all the vertices on levels
2 through $\ell-3$.  Similarly, using $u_{0,\ell-4}$ and $u_{0,n-2}$ in the role of $u_{0,0}$, we separate
$u_{0,0}$ from all vertices on levels 1 and $n-1$.   We require another argument for the vertices on level 0.

In $C_2$ there is a jump edge from $u_{1,n-2-\ell}$ to $u_{0,n-2}$ and the subpath from $u_{0,n-2}$ is
$[u_{0,n-2},u_{0,n-3},u_{0,n-4}]$.  Thus, if we do alternating downward and upward fills starting with downward,
then $u_{0,n-2}$ belongs to the modified $C_2$ and $u_{i,n-2}$ belongs to the modified $C_1$ for all
$1\leq i\leq m-2$.  Hence, if we use the automorphism in $G$ mapping $u_{0,n-2}$ to $u_{0,0}$, we have
a  2-factor separating $u_{0,0}$ and $u_{i,0}$ for $1\leq i\leq m-2$ for all $\mathrm{HTG}(m,n,\ell)$ in the
desired parameter range.  The element of $G$ that maps $u_{1,1}$ to $u_{0,0}$ in $\mathrm{HTG}(2,n,\ell)$
maps $u_{0,\ell+1}$ to $u_{1,0}$ which implies $u_{0,0}$ and $u_{1,0}$ are separated in the latter 2-factor.
The separation of $u_{0,0}$ and $u_{m-1,0}$ continues as we perform successive vertical fills.
This completes the proof.    \end{proof}

\section{Conclusion}

We have seen a hint that the problem of determining precisely which honeycomb toroidal graphs are
2-spanning cyclable is messy.  We sense this because there appear to be a few sporadic values of $n$ and $m$
for which $\mathrm{HTG}(m,n,\ell)$ is not 2-spanning cyclable given a fixed $\ell$.  The value $\ell=3$
illustrates this point well.

Consequently, we determined precise solutions for some small values of $\ell$ and then obtained asymptotic
results for the remaining values of $\ell$.  The asymptotic results are quadratic in $\ell$ and as we saw
they are not tight.


\begin{thebibliography}{9999}
\bibitem{A1} B. Alspach, Honeycomb Toroidal Graphs, {\sl Bull. Inst. Combin. Math.} {\bf 91} 
(2021), 94--114.
\bibitem{A2} B. Alspach, C. C. Chen and M. Dean, Hamilton paths in Cayley graphs on generalized 
dihedral groups, {\sl Ars Math. Contemp.} {\bf 3} (2010), 29--47.
\bibitem{L1} C.-K. Lin, J.-M. Tan, L.-H. Hsu and T.-L. Kung, Disjoint cycles in hypercubes with prescribed
vertices in each cycle, {\sl Discrete Appl. Math.} {\bf 161} (2013), 2992--3004.
\bibitem{M1} G.M. Megson, X. Yang and X. Liu, Honeycomb tori are Hamiltonian, {\sl Inform. Process. Lett.}
{\bf 72} (1999), 99--103. 
\bibitem{Q1} H. Qiao, E. Sabir and J. Meng, The spanning cyclability of Cayley graphs generated
by transposition trees, {\sl Discrete Appl. Math.} {\bf 328} (2023), 60--69.
\bibitem{W1} E. Weisstein, Frobenius Number, from {\sl MathWorld}. A Wolfram Web Resource.
https://mathworld.wolfram.com/FrobeniusNumber.html.
\bibitem{Y1} M.-C. Yang, L.-H. Hsu, C.-N. Hung and E. Cheng, 2-Spanning cyclability problems of some
generalized Petersen graphs,  {\sl Discuss. Math. Graph Theory} {\bf 40} (2020), 713--731.

\end{thebibliography}
\end{document}